\definecolor{labelkey}{rgb}{0,0.08,0.45}
\definecolor{refkey}{rgb}{0,0.6,0.0}
\definecolor{lightgray}{gray}{0.95}
\newcommand{\myscale}{0.45}
\newcommand{\ttrial}{t_{\rm trial}}
\newcommand{\tnew}{t_{\rm new}}
\newcommand{\fenv}[1]%
{\ensuremath{\,\overrightarrow{\operatorname{env}}_{#1}}}
\newcommand{\benv}[1]%
{\ensuremath{\,\overleftarrow{\operatorname{env}}_{#1}}}
\newcommand{\RR}{\ensuremath{\mathbb R}}
\newcommand{\NN}{\ensuremath{\mathbb N}}
\newcommand{\dom}{\ensuremath{\operatorname{dom}}}
\newcommand{\im}{\textrm{Im}}
\newcommand{\argmax}{\ensuremath{\operatorname*{argmax}}}
\newtheorem{theorem}{Theorem}[section]
\newtheorem{lemma}[theorem]{Lemma}
\newtheorem{definition}[theorem]{Definition}
\newtheorem{remark}[theorem]{Remark}
\def\doi{DOI}
\newcounter{count}
\newcommand{\la}{{\langle}}
\newcommand{\ra}{{\rangle}}
\begin{document}
%\linenumbers

\title{\textrm{A Proximal Gradient Method with an Explicit Line search for Multiobjective Optimization}}
\author{
Y.\ Bello-Cruz\thanks{Department of Mathematical Sciences, Northern Illinois University. Watson Hall 366, DeKalb, IL, USA - 60115. E-mail:
\texttt{yunierbello@niu.edu.}}\and J. G. Melo\thanks{Institute of Mathematics and Statistics, Federal University of Goias, Campus II- Caixa
    Postal 131, CEP 74001-970, Goi\^ania-GO, Brazil.  E-mail:
\texttt{jefferson@ufg.br}} \and L. F. Prudente\thanks{Institute of Mathematics and Statistics, Federal University of Goias, Campus II- Caixa
    Postal 131, CEP 74001-970, Goi\^ania-GO, Brazil.  E-mail:
\texttt{lfprudente@ufg.br}} \and R. V. G. Serra\thanks{Department of Mathematics, Federal University of Piau\'i, CEP 64049550, Teresina-PI, Brazil. E-mail:
\texttt{rayserra@ufpi.edu.br}}}

\maketitle

\begin{abstract} 
 \noindent 
 We present a proximal gradient method for solving convex multiobjective optimization problems, where each objective function is the sum of two convex functions, with one assumed to be continuously differentiable. The algorithm incorporates a backtracking line search procedure that requires solving only one proximal subproblem per iteration, and is exclusively applied to the differentiable part of the objective functions. Under mild assumptions, we show that the sequence generated by the method convergences to a weakly Pareto optimal point of the problem. Additionally, we establish an iteration complexity bound by showing that the method finds an $\varepsilon$-approximate weakly Pareto point in at most ${\cal O}(1/\varepsilon)$ iterations. Numerical experiments illustrating the practical behavior of the method is presented.
\end{abstract}
{\small \noindent {\bfseries 2010 Mathematics Subject
Classification:} {90C25, 90C29,  90C52, 65K05}
}

\noindent {\bfseries Keywords:} Convex programming, Full convergence,    Proximal Gradient method, Multiobjective optimization.
%%%%%%%%%%%%%%%%%%%%%%%%%%%%%%%%%%%%%%%%%%%%%%%%%%%%%%%%%%%%%%%%%%%%%%%%%%%%%%%%%%%

\section{Introduction}
 
Multiobjective optimization involves simultaneously minimizing two or more objective functions. Consider the vector-valued function $F:\RR^n \to (\RR\cup\{+\infty\})^m$ defined by $F(x):=(F_1(x),\ldots,F_m(x))$. The associated unconstrained multiobjective optimization problem is denoted as 
\begin{equation}\label{eq:mainprob}
\displaystyle \min_{x \in \RR^n} F(x).
\end{equation}
This paper focuses on problem \eqref{eq:mainprob}, assuming that each component $F_j:\RR^n\to \RR\cup\{+\infty\}$ has the the following special separable structure:
\begin{equation}\label{eq:separable}
F_j(x):= G_j(x)+H_j(x), \quad \forall j=1,\ldots,m,
\end{equation}
where $G_j:\RR^n\to \RR$ is continuously differentiable and convex, and $H_j:\RR^n\to \RR\cup\{+\infty\}$ is proper, convex, and continuous on its domain, but possibly nonsmooth. This particular problem involves many important applications. In particular, when $H_j$ is an indicator function of a convex set ${\cal C}$, \eqref{eq:mainprob} reduces to the convex-constrained problem of minimizing $G(x):=(G_1(x),\ldots,G_m(x))$ subject to $x \in {\cal C}$. Furthermore, the separable structure \eqref{eq:separable} can be used to model robust multiobjective optimization problems. These problems involve dealing with uncertainty in the data, and the optimal solution must occur under the worst possible scenario. We refer the reader to \cite{TanabeFukudaYamashita2019} for more details about applications.

In recent years, a popular strategy for solving multiobjective optimization problems involves extending methods designed for scalar-valued optimization to vector-value optimization. This approach was introduce in the seminal paper \cite{FliegeSvaiter2000}, where the steepest descent method was extended; see also \cite{Drummond2005,doi:10.1080/00036811.2011.640629}. Further advancements in this direction include the extension of the proximal point method proposed in \cite{bonnel2005proximal} and also studied in \cite{Bento2014,Ceng2010, CENG20071, doi:10.1080/01630563.2011.587072, chuong2011hybrid}. Other works exploring various algorithms are presented in \cite{Assuncao2021,Fliege.etall2009, wang2019extended, ansary, Morovati2017, POVALEJ2014765, QU2014503, Goncalves2019, PerezPrudente2018, BELLOCRUZ20115268, FukudaDrummond2011, FukudaDrummond2013,Drummond2004,gonccalves2021globally,Prudente2022}.

Problem \eqref{eq:mainprob} with the separable structure \eqref{eq:separable} in its scalar version (i.e., $m=1$) has been extensively studied; see \cite{BeckTeboulleFista,doi:10.1080/10556788.2016.1214959} and references therein. However, its multiobjective version (i.e., $m\geq 2$) has only recently gained attention. To the best of our knowledge, the first paper addressing this matter is \cite{doi:10.1080/02331934.2018.1440553}, where a forward–backward proximal point-type algorithm was proposed. Proximal-type methods are commonly employed to solve this problem. In \cite{TanabeFukudaYamashita2019}, a proximal gradient method with an Armijo line search was introduced; see also \cite{Tanabe2023rates,chen2023barzilaiborwein}. In \cite{ymsRay}, the authors proposed a proximal gradient method with a line search procedure that uses only information from the differentiable functions $G_j$; however, it may require solving multiple proximal subproblems per iteration. We mention that this approach extends the well-known Beck-Teboulle’s backtracking line search \cite{BeckTeboulleFista} to the vector-valued optimization setting. Accelerated versions of the proximal gradient method were explored in \cite{tanabe2022globally,Tanabe2023accelerated,nishimura2023monotonicity,zhang2023convergence}. Other methods include the condition gradient method \cite{doi:10.1080/02331934.2023.2257709} and Newton-type proximal methods \cite{chen2023convergence,doi:10.1080/10556788.2022.2157000,peng2023quasinewton,peng2022proximal,jiang2023nonmonotone}.

In the present paper, inspired by the work \cite{doi:10.1080/10556788.2016.1214959}, we go a step further than \cite{ymsRay} and introduce a proximal gradient method with an explicit line search procedure characterized by the following features: (i) only one proximal subproblem is solved per iteration; (ii) the backtracking scheme is exclusively applied to the differentiable functions $G_j$. In addition to requiring the solution of only one proximal subproblem, the new line search allows us to explore first-order information of $G_j$, and prevents the computation of potentially costly  functions $H_j$, resulting in computational savings. As discussed in Section~\ref{sec:numerical}, when the structure \eqref{eq:separable} is used to model robust multiobjective optimization problems, the evaluation of $H_j$ often involves solving an auxiliary optimization problem, making our proposal particularly advantageous in such scenarios. Regarding the convergence analysis of the method, we establish, under mild assumptions, the full convergence of the generated sequence to a weakly Pareto optimal solution of the problem. Additionally, we derive an iteration complexity bound by showing that the method finds an {\it $\varepsilon$-approximate weakly Pareto point} in at most ${\cal O}(1/\varepsilon)$ iterations.

This paper is organized as follows. Section~\ref{sec:Preliminares} presents some definitions and basic results used throughout the paper. In Section~\ref{sec:alg}, we introduce our proximal gradient algorithm with the new explicit line search procedure and show that it is well-defined. The asymptotic convergence analysis is presented in Section~\ref{sec:convergence}, and the iteration-complexity bound is established in Section~\ref{sec:complexity}. Numerical experiments are presented in Section~\ref{sec:numerical}. Finally, Section~\ref{sec:concluding} contains a conclusion.

%\cite{geoffrion1968proper,Kais99}

%%%%%%%
\section{Preliminaries}  \label{sec:Preliminares}

We write $p \coloneqq q$ to indicate that $p$ is defined to be equal to $q$. We denote by $   \NN$ the set of nonnegative integers numbers $\{0, 1, 2,\ldots\}$, by $\RR$ the set of real numbers, and by $\RR_+$  the set of nonnegative real numbers. As usual, $\RR^m$ and $\RR^{m\times n}$ stand for the set of $m$ dimensional real column vectors and the set
of $m\times n$ real matrices, respectively. We define $\overline{\RR}:=\RR \cup \left\{ +\infty\right\}$ and $\overline{\mathbb{R}}^m := (\RR \cup \left\{ +\infty\right\})^m$. For $u, v \in \RR^{m}$, $v\succeq u$ (or $u \preceq v$) means that $v_j \geq u_j$ for all $j=1,\ldots,m$, and $v\succ u$ (or $u \prec v$) means that  $v_j>u_j$ for all $j=1,\ldots,m$. The transpose of the vector $u\in\RR^m$ is denoted by $u^\top$. The Euclidean norm is denoted by $\| \cdot \|$.  

Next we present some definitions and properties for scalar functions. The effective domain of  $\phi:\RR^n \to \overline{\RR}$ is defined as $\dom(\phi):=\left\{ x\in \RR^n \mid \phi(x)< +\infty \right\}$.
The function $\phi$ is said to be {\it proper} if $\dom(\phi)$ is nonempty, and $\phi$ is {\it convex} if,  for every $ x,y \in \dom(\phi)$, there holds
$$\phi(t x + (1 - t)y) \leq t \phi(x) + (1 - t)\phi(y), \quad \forall t \in [0,1].$$
The subdifferential of a proper convex function $\phi$ at $x\in\dom(\phi)$ is defined by
\begin{equation}\label{sub-inq}
\partial \phi(x):=\{u\in \RR^n \mid \phi(y)\ge \phi(x)+ u^\top(y-x) ,\; \forall y\in \RR^n\}.
\end{equation}
  If $\phi: \RR^n \rightarrow \RR$ is convex and differentiable, then the subdiferential is an unitary set, i.e., $\partial \phi(x)= \{ \nabla \phi(x) \}$, and so 
\begin{equation}\label{gradient-inequality}
\phi(y)\geq \phi(x)+\nabla \phi(x)^\top(y-x), \quad \forall x,y\in \RR^n,
\end{equation}
where $\nabla \phi$ denotes the gradient of $\phi$. We say that $\nabla \phi$ is {\it Lipschitz continuous} with constant $L > 0$ if
$$\|\nabla \phi(y) - \nabla \phi(x)\|\leq L\|y-x\|, \quad \forall x,y\in \RR^n.$$
In this case, it holds that
\begin{equation}\label{descentlemma}
    \phi(y)\leq \phi(x)+\nabla \phi(x)^\top(y-x)+\dfrac{L}{2}\|y-x\|^2, \quad \forall x,y\in \RR^n,
\end{equation}
see, for example, \cite[Proposition A.24]{Bertsekas1999}.

Let $F :\mathbb{R}^n \to \overline{\mathbb{R}}^m$ be a vector-valued function given by $F(x):=(F_1(x),\ldots,F_m(x))$. The effective domain and the image of $F$ are denoted by $\dom(F):=\{ x\in \RR^n \mid F_j(x)< +\infty, \forall j=1,\ldots,m \}$ and $\im(F):=\{y\in \RR^m \mid y=F(x), x\in \RR^n\}$, respectively. Since for a multiobjective optimization problem, there is typically no single point minimizing all functions at once, we employ the concept of {\it Pareto optimality} to characterize a solution, as defined below.

\begin{definition}
A point $\bar{x} \in \mathbb{R}^n$ is {\it Pareto optimal} for \eqref{eq:mainprob} if there does not exist another $x \in \mathbb{R}^n$ such that $F(x) \preceq F(\bar{x})$ and $F_i(x) < F_i(\bar{x})$ for at least one index $i \in \{1, \ldots, m\}$. Furthermore, $\bar{x} \in \mathbb{R}^n$ is said to be a {\it weakly Pareto optimal} point for \eqref{eq:mainprob} if there does not exist another $x \in \mathbb{R}^n$ such that $F(x) \prec F(\bar{x})$.
\end{definition}

%  %Given $x, y \in\mathrm{dom}(F)$, it is said that $x$ dominates $y$ when $F(y)-F(x)\in\R^m_+\setminus\{0\}$.

We conclude this section by recalling the well-known concept of quasi-Fej\'er convergence. 

\begin{definition}\label{def-fejer}
Let $\Omega$ be a nonempty subset of $\RR^n$. A sequence $\{x^k\}$ in $\RR^n$ is said to be quasi-Fej\'er convergent to $\Omega$ if and only if for every $x \in \Omega$ there exists a summable sequence $ \{\varepsilon_k\} \subset \RR_+ $ such that, for every $k\in\NN$, there  holds
\begin{equation*}
\| x^{k+1}-x\|^2 \leq \| x^{k}-x\|^2 +  \varepsilon_k.
\end{equation*}
\end{definition}

The main property of a quasi-Fej\'er convergent sequence is stated in the following lemma, and its proof can be found in  \cite{doi:10.1080/02331939508844042}.

\begin{lemma}\label{lema-Fejer} 
If $\{x^k\}$ is quasi-F\'ejer convergent to $\Omega$, then the following statements hold:
\item {\bf (i)} the sequence $\{x^k\}$ is bounded;
\item {\bf (ii)} if a limit point $x^*$ of $\{x^k\}$ belongs to $\Omega$, then $\{x^k\}$ converges to $x^*$.
\end{lemma}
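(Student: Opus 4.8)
The plan is to establish (i) by iterating the defining inequality and (ii) by reducing the problem to an elementary fact about nonnegative scalar sequences perturbed by a summable term.

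\textbf{Part (i).} First I would fix an arbitrary $x \in \Omega$ and let $\{\varepsilon_k\} \subset \RR_+$ be the associated summable sequence provided by Definition~\ref{def-fejer}. Applying the inequality $\|x^{k+1}-x\|^2 \le \|x^{k}-x\|^2 + \varepsilon_k$ recursively from $0$ up to $k-1$ gives, for every $k \in \NN$,
\[
\|x^{k}-x\|^2 \;\le\; \|x^{0}-x\|^2 + \sum_{j=0}^{k-1}\varepsilon_j \;\le\; \|x^{0}-x\|^2 + \sum_{j=0}^{\infty}\varepsilon_j \;=:\; M \;<\; +\infty,
\]
the last quantity being finite precisely because $\{\varepsilon_k\}$ is summable. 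Hence $\{x^k\}$ is contained in the closed ball of center $x$ and radius $\sqrt{M}$, so it is bounded.

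\textbf{Part (ii).} Suppose $x^*\in\Omega$ is a limit point of $\{x^k\}$, and take the summable sequence $\{\varepsilon_k\}$ corresponding to the choice $x=x^*$. Writing $a_k := \|x^{k}-x^*\|^2 \ge 0$ and $\sigma_k := \sum_{j=k}^{\infty}\varepsilon_j$ (well-defined, with $\sigma_k \to 0$ by summability), the quasi-Fej\'er inequality reads $a_{k+1}\le a_k+\varepsilon_k$. The key step --- and the only one that is not completely mechanical --- is to note that although $\{a_k\}$ itself need not be monotone, the corrected sequence $b_k := a_k + \sigma_k$ is nonincreasing:
\[
b_{k+1} = a_{k+1}+\sigma_{k+1} \le a_k+\varepsilon_k+\sigma_{k+1} = a_k+\sigma_k = b_k.
\]
Since $\{b_k\}$ is nonincreasing and bounded below by $0$, it converges to some $\ell\ge 0$; because $\sigma_k\to 0$, the sequence $\{a_k\}$ converges to the same limit $\ell$.

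To finish, I would use the hypothesis that $x^*$ is a limit point: there is a subsequence $\{x^{k_i}\}$ with $x^{k_i}\to x^*$, hence $a_{k_i}\to 0$, which forces $\ell=0$. Therefore $\|x^{k}-x^*\|^2 = a_k \to 0$, i.e., $x^k\to x^*$, proving (ii). I do not anticipate any real obstacle; the whole argument is the standard proof of the quasi-Fej\'er lemma (see \cite{doi:10.1080/02331939508844042}), the one genuinely instructive point being the passage from $a_k$ to the monotone surrogate $a_k+\sigma_k$, which absorbs the non-monotone perturbations $\varepsilon_k$.
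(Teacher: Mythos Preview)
Your argument is correct and is precisely the classical proof of the quasi-Fej\'er lemma. Note that the paper itself does not supply a proof of this statement: it merely records the result and refers the reader to \cite{doi:10.1080/02331939508844042} for the details, so there is no in-paper argument to compare against. What you have written is essentially the proof one finds in that reference, including the standard trick of passing to the monotone surrogate $b_k = a_k + \sigma_k$; nothing is missing and nothing needs to be added.
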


%%%%%%%%%%%%%%%%%

\section{Algorithm} \label{sec:alg}

For clarity and organization, we explicitly state the assumptions regarding problem \eqref{eq:mainprob}, which will be considered throughout the article.

\vspace{8pt}
\noindent {\bf General Assumption:} {\it The objective function $F:\RR^n \to \overline {\RR}^m$, given by $F(x):=(F_1(x),\ldots,F_m(x))$,  has the following special separable structure:
$$F_j(x):= G_j(x)+H_j(x), \quad \forall j=1,\ldots,m,$$
where:
\begin{itemize}
    \item[(i)] $G_j:\RR^n\to \RR$  is continuously differentiable and convex, for all $j=1,\ldots,m$;
    \item[(ii)] $H_j:\RR^n\to \overline{\RR}$ is proper, convex, and continuous on $\dom(H_j)$, for all $j=1,\ldots,m$;
    \item[(iii)] $\dom(F)$ is nonempty and closed.
\end{itemize}
}

We now introduce a proximal regularization that will be employed in our method.
Given $x\in \dom(F)$ and $\alpha>0$, let us define the function $\psi _{x}:\RR^n\to \overline{\RR}$ as follows:
 \begin{equation}\label{def:psi-theta1}
 \psi _{x} (u) := \max_{j=1,\ldots,m} \left(\nabla G_j(x)^\top(u - x) + H_j(u) - H_j(x) \right), \quad \forall u\in \RR^n.
 \end{equation}
 Now, consider the scalar-valued optimization problem:
 \begin{equation}\label{subprob}
\displaystyle \min_{u\in\RR^n} \psi _{x} (u) + \dfrac{1}{2 \alpha} \| u - x \|^2.
\end{equation}
 Due to the convexity of $H_j$ for all $j=1,\ldots,m$, it follows that $\psi_x$ is convex and the objective function of \eqref{subprob} is strongly convex. Hence, \eqref{subprob} has a unique solution, which trivially belongs to  $\dom(F)$ (noting that $\dom(F) =\dom(\psi_x)$). We denote the solution of \eqref{subprob} by $p_\alpha(x)$ and its optimal value by $\theta_\alpha(x)$, i.e.,
\begin{equation}\label{iterationproxgrad}
p_\alpha(x):=\displaystyle {\rm arg}\min_{u \in \RR^n} \psi _{x} (u) + \dfrac{1}{2 \alpha} \| u - x \|^2
\end{equation}
and 
\begin{equation}\label{def:psi-theta}
 \theta_\alpha(x):=\psi _{x} (p_\alpha(x)) + \dfrac{1}{2 \alpha} \| p_\alpha(x) - x \|^2.
\end{equation}
It is worth noting that when $m=1$, $p_{\alpha}(x)$ is directly related to the well-known {\em forward-backward} operator evaluated at $x$. 

In the following, we state some properties concerning the functions $p_\alpha(\cdot)$ and $\theta_\alpha(\cdot)$, defined in \eqref{iterationproxgrad} and \eqref{def:psi-theta}, respectively.

\begin{lemma}\label{lem:prop-theta}
Given $\alpha>0$, consider $p_\alpha:\dom(F)\to \dom(F)$ and $\theta_\alpha:\dom(F)\to \RR$ as in \eqref{iterationproxgrad} and \eqref{def:psi-theta}, respectively. Then, the following statements are true.
%\begin{itemize}
\item[ {\bf (i)}] $\theta_\alpha(x)\leq 0$ for all $x\in \dom(F)$.

\item[ {\bf (ii)}] The following statements are equivalent: (a) $x$ is a weakly Pareto optimal point of \eqref{eq:mainprob}; (b)  $\theta_\alpha(x) = 0$; (c) $p_\alpha(x)=x$.

\item[ {\bf (iii)}] $p_\alpha(\cdot)$ and $\theta_\alpha(\cdot)$ are continuous.
\end{lemma}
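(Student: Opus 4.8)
The plan is to exploit the strong convexity of the subproblem \eqref{subprob} and the fact that $\psi_x(x) = 0$, which is immediate from \eqref{def:psi-theta1}.

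\medskip

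\noindent\textbf{Part (i).} First I would observe that $u = x$ is a feasible point for the unconstrained problem \eqref{subprob}, and that the objective value there is $\psi_x(x) + \tfrac{1}{2\alpha}\|x-x\|^2 = 0$, since every term inside the max defining $\psi_x(x)$ vanishes. Since $p_\alpha(x)$ is the \emph{minimizer}, we get $\theta_\alpha(x) \le 0$ directly.

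\medskip

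\noindent\textbf{Part (ii).} I would prove the cycle of implications. For $(b)\Rightarrow(c)$: the objective of \eqref{subprob} is strongly convex with a unique minimizer; if its optimal value is $0$, which is also attained at $x$, then by uniqueness $p_\alpha(x) = x$. For $(c)\Rightarrow(b)$: plug $p_\alpha(x) = x$ into \eqref{def:psi-theta} and use $\psi_x(x)=0$ to get $\theta_\alpha(x) = 0$. For the equivalence with $(a)$, I would show $(a)\Leftrightarrow$ ``$\psi_x(u) \ge 0$ for all $u$'', i.e.\ ``$x$ minimizes $\psi_x$''. The key algebraic fact is that, using the gradient inequality \eqref{gradient-inequality} for each convex $G_j$,
$$
F_j(u) - F_j(x) = \big(G_j(u)-G_j(x)\big) + \big(H_j(u)-H_j(x)\big) \ge \nabla G_j(x)^\top(u-x) + H_j(u) - H_j(x),
$$
so $\psi_x(u) \le \max_j\big(F_j(u)-F_j(x)\big)$ for all $u$. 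If $x$ is \emph{not} weakly Pareto optimal, there is $u$ with $F_j(u) < F_j(x)$ for all $j$, hence $\psi_x(u) < 0$, so $\theta_\alpha(x) < 0$ — this gives the contrapositive of $(b)\Rightarrow(a)$. Conversely, suppose $\theta_\alpha(x) < 0$, so there is $u$ with $\psi_x(u) + \tfrac{1}{2\alpha}\|u-x\|^2 < 0$, hence $\psi_x(u) < 0$; setting $u_t := x + t(u-x)$ for small $t \in (0,1]$ and using convexity of $\psi_x$ together with $\psi_x(x)=0$ gives $\psi_x(u_t) \le t\psi_x(u) < 0$, which means $\nabla G_j(x)^\top(u_t - x) + H_j(u_t) - H_j(x) < 0$ for every $j$; combining this with the descent lemma \eqref{descentlemma} applied to each $G_j$ (and taking $t$ small enough to absorb the quadratic term), one obtains $F_j(u_t) < F_j(x)$ for all $j$, so $x$ is not weakly Pareto optimal. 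This closes $(a)\Rightarrow(b)$ by contraposition.

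\medskip

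\noindent\textbf{Part (iii).} For continuity of $p_\alpha(\cdot)$ I would use a standard stability argument for strongly convex parametric minimization: fix $x$ and a sequence $x^k \to x$ in $\dom(F)$ (which is closed, so $x \in \dom(F)$). Using that $u \mapsto \psi_{x^k}(u) + \tfrac{1}{2\alpha}\|u-x^k\|^2$ is $\tfrac{1}{\alpha}$-strongly convex and $p_\alpha(x^k)$ is its minimizer, while comparing with the value at the fixed feasible point $p_\alpha(x)$, one gets a bound of the form $\tfrac{1}{2\alpha}\|p_\alpha(x^k) - p_\alpha(x)\|^2 \le \big[\text{objective at } p_\alpha(x)\big] - \big[\text{objective at } p_\alpha(x^k)\big]$; the right-hand side is controlled by the differences $\nabla G_j(x^k) - \nabla G_j(x)$ (continuous since $G_j \in C^1$) and $\|x^k - x\|$, together with boundedness of the terms $H_j$, once one argues the sequence $\{p_\alpha(x^k)\}$ is bounded and its limit points have finite $H_j$-values (using continuity of $H_j$ on its domain and closedness of $\dom(F)$). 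This forces $p_\alpha(x^k) \to p_\alpha(x)$. Continuity of $\theta_\alpha(\cdot)$ then follows from its formula \eqref{def:psi-theta}, continuity of $p_\alpha(\cdot)$, continuity of each $\nabla G_j$, and continuity of each $H_j$ on $\dom(H_j)$.

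\medskip

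\noindent\textbf{Main obstacle.} I expect the technical core to be part (iii): because the functions $H_j$ are only continuous \emph{on their domains} and $\dom(F)$ is merely closed (not open), one must handle feasibility and the possibility that the $H_j$ terms misbehave near the boundary. The careful point is to first establish boundedness of $\{p_\alpha(x^k)\}$ and then that every limit point lies in $\dom(F)$ with the right $H_j$-values, before passing to the limit in the strong-convexity estimate. The arguments in parts (i) and (ii) are comparatively routine, modulo the interplay between $\psi_x$ and the $F_j$ via the gradient inequality and the descent lemma.
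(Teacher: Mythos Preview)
The paper does not prove this lemma itself; its entire proof is the citation ``See \cite[Lemma~3.2]{TanabeFukudaYamashita2019}.'' So there is no in-paper argument to compare your proposal against, and your sketch for parts~(i) and~(iii) is a reasonable reconstruction of the standard reasoning.

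There is one genuine gap in your part~(ii). To pass from $\psi_x(u_t)<0$ to $F_j(u_t)<F_j(x)$ you invoke the descent lemma \eqref{descentlemma}, but that inequality requires $\nabla G_j$ to be Lipschitz continuous, which is Assumption~(A2) in the paper and is \emph{not} part of the General Assumption under which Lemma~\ref{lem:prop-theta} is stated (only $G_j\in C^1$ is assumed there). The repair is easy and keeps your structure intact: replace the descent lemma by mere differentiability, i.e.\ $G_j(u_t)-G_j(x)=\nabla G_j(x)^\top(u_t-x)+o(t)$, and combine it with the inequality you already derived, $\nabla G_j(x)^\top(u_t-x)+H_j(u_t)-H_j(x)\le t\,\psi_x(u)$, to obtain $F_j(u_t)-F_j(x)\le t\,\psi_x(u)+o(t)<0$ for all sufficiently small $t>0$.
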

\begin{proof}
See\cite[Lemma 3.2]{TanabeFukudaYamashita2019}.
\end{proof}

We now formally describe our proximal gradient method for solving  \eqref{eq:mainprob}.

\vspace{12pt}
	\noindent
	\hrule
	\vspace{0.1cm}
	\noindent
	{\bf Multiobjective Proximal Gradient (MPG) algorithm}
	\vspace{0.1cm}
	\hrule
\begin{description}
\item[\bf Step~0.]  Let $x^0\in \dom(F)$, $\alpha>0$, $\gamma \in (0,2/\alpha)$, and $0<\tau_1<\tau_2<1$ be given. Initialize $k\leftarrow 0$.
\item[\bf Step~1.] {\it Subproblem}\\
Compute $p^k:=p_\alpha(x^k)$ and $\theta_\alpha(x^k)$ as in \eqref{iterationproxgrad} and \eqref{def:psi-theta}, respectively.
\item[\bf Step 2.] \textit{Stopping criterion}\\
		If $\theta_\alpha(x^k)=0$, then STOP.
\item[\bf Step~3.] \textit{Line search procedure}\\
Define $d^k:=p^k-x^k$, take $j_k^* \in \argmax_{j=1,\ldots,m}\nabla G_j(x^k)^\top d^k$, and set $\ttrial =1$.
\begin{description}
\item[\bf Step~3.1.] If 
$$G_{j_k^*}(x^k+\ttrial d^k) \leq G_{j_k^*}(x^k) + \ttrial \nabla G_{j_k^*}(x^k)^\top d^k + \ttrial\frac{\gamma}{2} \| d^k \|^2,$$
then go to Step~3.2. Otherwise, compute $\tnew\in[\tau_1\ttrial,\tau_2\ttrial]$, set $\ttrial\leftarrow \tnew$ and repeat Step~3.1.
\item[\bf Step~3.2.] If 
$$F(x^k+\ttrial d^k)\preceq F(x^k),$$
then define $t_k=\ttrial$ and go to Step~4.
\item[\bf Step~3.3.] Compute $\tnew\in[\tau_1\ttrial,\tau_2\ttrial]$ and set $\ttrial\leftarrow \tnew$. If 
$$G_{j}(x^k+\ttrial d^k) \leq G_{j}(x^k) + \ttrial \nabla G_{j}(x^k)^\top d^k + \ttrial\frac{\gamma}{2} \| d^k \|^2, \quad \forall j=1,\ldots,m,$$
then define $t_k=\ttrial$ and go to Step~4. Otherwise, repeat Step~3.3.
\end{description}
\item[\bf Step~4.] \textit{Iterate}\\
Define $x^{k+1}:=x^k +t_k d^k$, set $k \leftarrow k+1$ and go to Step~$1$.
\nopagebreak
\end{description}\hrule
\nopagebreak
\vspace{8pt}

 Some comments are in order. (a) If $H_j$ lacks smoothness, the subproblem \eqref{subprob} in Step~1 may also lack smoothness. In such instances, the approach for solving \eqref{subprob} depends on the specific structure of the functions $H_j$. In Section~\ref{sec:numerical}, we will explore an application to robust multiobjective optimization and discuss how to solve the subproblem in this particular scenario. (b) At Step~2, it follows from Lemma~\ref{lem:prop-theta}~(ii) that the MPG algorithm stops at iteration $k$ if and only if $x^k$ is a weakly Pareto optimal point. (c) In the line search procedure, the backtracking scheme is applied only to the differentiable functions $G_j$, preventing the computation of potentially costly $H_j$ functions. In Step~3.1, we first perform a line search using a single function $G_{j_k^*}$. If the resulting $\ttrial$ step size leads to a decrease in all $F_j$'s, it is accepted and the line search is finished. Otherwise, the line search is carried out on all $G_j$'s functions until the condition in Step~3.3 is satisfied. Thus, the $k$-th iteration concludes with one of the following scenarios:
 \begin{equation} \label{LS1}
    G_{j_k^*}(x^{k+1}) \leq G_{j_k^*}(x^k) + t_k \nabla G_{j_k^*}(x^k)^\top d^k + t_k\frac{\gamma}{2} \| d^k \|^2 \; \mbox{and} \; \left[F(x^{k+1})\preceq F(x^k)\right],
\end{equation}
or
\begin{equation} \label{LS2}
G_{j}(x^{k+1}) \leq G_{j}(x^k) + t_k \nabla G_{j}(x^k)^\top d^k + t_k\frac{\gamma}{2} \| d^k \|^2, \quad \forall j=1,\ldots,m.
\end{equation}
It is worth mentioning that, given $j\in\{1,\ldots,m\}$, $d^k$ is not necessarily a descent direction for $G_j$ at $x^k$, meaning that it can happen that $\nabla G_{j}(x^k)^\top d^k\geq 0$.
However, as we will see, both cases \eqref{LS1} and \eqref{LS2} result in a decrease in the value of each objective $F_j$.

In the following, we show that the MPG algorithm is well defined. This means that if the algorithm does not stop at $x^k$, then it is possible to obtain $x^{k+1}$ in a finite time.

\begin{theorem}
The MPG algorithm is well defined and stops at iteration $k$ if and only if $x^k$ is a weakly Pareto optimal point.
\end{theorem}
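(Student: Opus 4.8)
The plan is to treat the statement as two independent claims. The equivalence ``stops at iteration $k$ $\iff$ $x^k$ is weakly Pareto optimal'' is immediate: the algorithm stops exactly when Step~2 is triggered, i.e.\ when $\theta_\alpha(x^k)=0$, and by Lemma~\ref{lem:prop-theta}~(ii) this holds if and only if $x^k$ is a weakly Pareto optimal point of \eqref{eq:mainprob}. So the substance of the theorem is well-definedness: I must show that if the algorithm does \emph{not} stop at $x^k$, then Step~3 returns a step size $t_k$ after finitely many inner iterations and the new iterate $x^{k+1}=x^k+t_kd^k$ again lies in $\dom(F)$, so that Step~1 of the next iteration is again well posed (Steps~1--2 are already well posed because \eqref{subprob} has a unique minimizer, which lies in $\dom(F)$).

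First I would note that not stopping at $x^k$ means $\theta_\alpha(x^k)\neq 0$, hence $\theta_\alpha(x^k)<0$ by Lemma~\ref{lem:prop-theta}~(i), hence $p^k\neq x^k$ by Lemma~\ref{lem:prop-theta}~(ii); thus $d^k=p^k-x^k\neq 0$ and $\tfrac{\gamma}{2}\|d^k\|^2>0$, and it is exactly this strict positivity that will force the backtracking to stop. Then I would record the elementary bookkeeping for $\ttrial$: it starts at $1$ and each update multiplies it by a factor in $[\tau_1,\tau_2]\subset(0,1)$, so $\ttrial$ is non-increasing, stays in $(0,1]$, and after $\ell$ reductions is $\le\tau_2^\ell\to 0$. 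Consequently every tested point $x^k+\ttrial d^k=(1-\ttrial)x^k+\ttrial p^k$ is a convex combination of points of $\dom(F)$; since $\dom(F)=\bigcap_{j}\dom(H_j)$ is convex (each $H_j$ being convex), all points tested in Step~3 lie in $\dom(F)$, every evaluation there is finite, and the same remark gives $x^{k+1}\in\dom(F)$ once $t_k\in(0,1]$ is produced.

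The core of the proof is termination of the two backtracking loops. For each $j$, differentiability of $G_j$ at $x^k$ along $d^k$ yields
\[
\frac{G_j(x^k+td^k)-G_j(x^k)-t\,\nabla G_j(x^k)^\top d^k}{t}\;\xrightarrow[\,t\downarrow 0\,]{}\;0 ,
\]
so, because $\tfrac{\gamma}{2}\|d^k\|^2>0$, there is $\bar t_j>0$ such that $G_j(x^k+td^k)\le G_j(x^k)+t\,\nabla G_j(x^k)^\top d^k+t\tfrac{\gamma}{2}\|d^k\|^2$ for every $t\in(0,\bar t_j]$. With $\bar t:=\min_{1\le j\le m}\bar t_j>0$, all $m$ of these inequalities hold simultaneously whenever $t\in(0,\bar t]$. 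Since $\ttrial\to 0$ under repeated reduction, after finitely many steps $\ttrial\le\bar t_{j_k^*}$, so the test in Step~3.1 eventually passes and control moves to Step~3.2; and after at most finitely many further reductions $\ttrial\le\bar t$, so the (stronger) test in Step~3.3 holds and that loop exits. Hence Step~3 always terminates --- either by acceptance in Step~3.2 or inside Step~3.3 --- returning $t_k\in(0,1]$, and $x^{k+1}\in\dom(F)$ as above, which proves the algorithm is well defined.

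I expect the termination of the backtracking to be the only delicate point, and the reason it is delicate is that the General Assumption does \emph{not} require the gradients $\nabla G_j$ to be Lipschitz, so one cannot use the descent inequality \eqref{descentlemma} to produce an explicit acceptance threshold $t\le\gamma/L$; the argument must instead rely solely on first-order differentiability of each $G_j$ (the $o(t)$ estimate above), on the strict inequality $\tfrac{\gamma}{2}\|d^k\|^2>0$ coming from $d^k\neq 0$, and on the geometric decay of $\ttrial$. A smaller point to handle carefully is the logical structure of the line search: the acceptance test in Step~3.2 may never succeed, so well-definedness cannot be argued through it --- it is salvaged by the fact that Step~3.3 is guaranteed to terminate unconditionally.
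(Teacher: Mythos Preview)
Your proposal is correct and follows essentially the same approach as the paper: both hinge on the observation that differentiability of each $G_j$ at $x^k$ yields a threshold $\bar t_j>0$ below which the backtracking inequality holds, and since $d^k\neq 0$ (by Lemma~\ref{lem:prop-theta}) and $\ttrial\to 0$ geometrically, the line search must terminate. You are in fact more thorough than the paper --- you separately track termination of the Step~3.1 and Step~3.3 loops, verify that all tested points and $x^{k+1}$ lie in $\dom(F)$ by convexity, and correctly flag that the descent lemma~\eqref{descentlemma} is unavailable here --- whereas the paper argues termination for a generic index $j$ and leaves the domain-membership check implicit.
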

\begin{proof}
Due to the strong convexity of the objective function in \eqref{subprob}, the subproblem at Step~1 is solvable, allowing the computation of $p^k$ and $\theta_\alpha(x^k)$. According to Lemma~\ref{lem:prop-theta}~(ii), the MPG algorithm stops at Step~2 in iteration $k$ if and only if $x^k$ is a weakly Pareto optimal point. Now, assuming that $x^k$ is not a weakly Pareto optimal point, it follows from Lemma~\ref{lem:prop-theta}~(ii) that $d^k\neq 0$. Thus, for any arbitrary index $j\in\{1,\ldots,m\}$, the differentiability of $G_j$ ensures the existence of $\delta>0$ such that
$$\left|\dfrac{G_{j}(x^k+t d^k) -G_{j}(x^k)}{t} - \nabla G_{j}(x^k)^\top d^k\right|  \leq \frac{\gamma}{2} \| d^k \|^2,$$
for all $t\in(0,\delta]$. Hence,
$$G_{j}(x^k+t d^k) \leq G_{j}(x^k) + t \nabla G_{j}(x^k)^\top d^k + t\frac{\gamma}{2} \| d^k \|^2, \quad \forall j=1,\ldots,m,$$
for all $t\in(0,\delta]$. Therefore, the line search procedure ultimately finishes in a finite number of (inner) steps, and $x^{k+1}$ is properly defined at Step~4.
\end{proof}

\section{Asymptotic convergence analysis} \label{sec:convergence}

The MPG algorithm successfully stops if a weakly Pareto optimal point is found. Then, in order to analyze its convergence properties, we assume henceforth that the MPG algorithm generates an infinite sequence, which is equivalent to say that none $x^k$ is a weakly Pareto optimal point of problem \eqref{eq:mainprob}. 

% Additionally, we introduce the following assumption, meaning that the set $\im(F)$ is {\it complete} with respect to the {\it Paretian} cone $\RR^m_+:=\{x\in\RR^m\mid x\succeq 0\}$.

% \vspace{8pt}
% \noindent {\bf Assumption (A):} {\it All monotonically nonincreasing sequences in the set $\im(F)$ are bounded below by a point in $\im(F)$, i.e., for every sequence $\{y^k\}\subset \dom(F)$ such that $F(y^{k+1}) \preceq F(y^k)$ for all $k\in\NN$, there exists  $y\in\dom(F)$ such that $F(y) \preceq F(y^k)$ for all $k\in\NN$.}
% \vspace{8pt}

% Assumption~(A) is widely used in the related literature (e.g., \cite{Drummond2005,doi:10.1137/S1052623403429093,Drummond2004,Bento2014,ymsRay,BELLOCRUZ20115268,FukudaDrummond2013,Ceng2010,CENG20071,doi:10.1080/01630563.2011.587072,doi:10.1080/00036811.2011.640629}).
% It is commonly employed to ensure the existence of efficient solutions for vector-valued optimization problems, see \cite[Section~2.3]{DinhTheLuc}. In the scalar-valued unconstrained case, it is equivalent to ensuring the existence of an optimal point.

% In the following, we show that the generated sequence satisfies a key inequality that is used to prove the convergence of the algorithm. A particular consequence of this result is that if, at iteration $k$, the backtracking in the line search procedure is based on $G_j$, it automatically leads to a decrease in the corresponding objective $F_j$. It is worth noting that this is always the case of $F_{j_k^*}$.

In the following, we establish some key inequalities for our analysis. In particular, we show that if, at iteration $k$, the backtracking in the line search procedure is based on $G_j$, it automatically leads to a decrease in the corresponding objective $F_j$. 

\begin{lemma} \label{lema-para-qF} 
Let $ \{x^k\} $ be generated by the MPG algorithm. Suppose that at iteration $k$, the index $j\in\{1,\ldots,m\}$ is such that
\begin{equation}\label{ineq:Linesearch-VPGM}
G_{j}(x^{k+1}) \leq  G_{j}(x^k) + t_k \nabla G_{j}(x^k)^\top d^k + t_ k\frac{\gamma}{2} \| d^k \|^2.
\end{equation}
Then
\item[ {\bf (i)}] $\psi_{x^k}(p^k) \geq \dfrac{1}{t_k} \left(  F_j(x^{k+1}) - F_j(x^k)\right)  - \dfrac{\gamma}{2} \| d^k \|^2$;
\item[ {\bf (ii)}] for every $x\in \dom(F)$, we have 
\begin{equation}\label{argentina}
\begin{split}
\|x^{k+1}-x\|^2 \leq \; & \|x^{k}-x\|^2 + 2\alpha  \left(  F_j(x^k) - F_j(x^{k+1})\right) +2 \alpha t_k \max_{i=1,\ldots,m} \left(  F_i(x) - F_i(x^k) \right) \\
                & - 2\alpha t_k \left(\dfrac{1}{\alpha}-\frac{\gamma}{2}\right)\|d^k\|^2 +t_k^2\|d^k\|^2;
 \end{split}
 \end{equation}
\item[ {\bf (iii)}] $F_j(x^{k+1}) - F_j(x^{k}) \leq     - t_k \left(\dfrac{1}{\alpha}-\dfrac{\gamma}{2}\right)\|d^k\|^2$.
\end{lemma}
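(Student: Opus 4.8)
The plan is to prove items (i)--(iii) in order, deriving (i) first and then feeding it into the proofs of (ii) and (iii). Two elementary facts will be used repeatedly. First, since the line search in Step~3 starts from $\ttrial=1$ and only shrinks, $t_k\in(0,1]$, so $x^{k+1}=x^k+t_kd^k=(1-t_k)x^k+t_kp^k$ is a convex combination of $x^k$ and $p^k$. Second, $\psi_{x^k}(x^k)=0$, which is immediate from the definition \eqref{def:psi-theta1} because each term in the max vanishes at $u=x^k$.

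For item (i), I would start from the max-structure in \eqref{def:psi-theta1} to extract the lower bound $\psi_{x^k}(p^k)\geq \nabla G_j(x^k)^\top d^k + H_j(p^k)-H_j(x^k)$ using precisely the index $j$ from the hypothesis. The hypothesis \eqref{ineq:Linesearch-VPGM} rearranges to $\nabla G_j(x^k)^\top d^k \geq \frac{1}{t_k}(G_j(x^{k+1})-G_j(x^k)) - \frac{\gamma}{2}\|d^k\|^2$, while convexity of $H_j$ applied to $x^{k+1}=(1-t_k)x^k+t_kp^k$ gives $H_j(p^k)-H_j(x^k)\geq \frac{1}{t_k}(H_j(x^{k+1})-H_j(x^k))$. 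Adding the two and using $F_j=G_j+H_j$ produces (i).

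For item (iii), the extra ingredient is a sharp upper bound on $\psi_{x^k}(p^k)$. Since the objective of \eqref{subprob} is $\tfrac{1}{\alpha}$-strongly convex with minimizer $p^k$, evaluating the strong-convexity inequality at $u=x^k$ and using $\psi_{x^k}(x^k)=0$ yields $\theta_\alpha(x^k)\leq -\frac{1}{2\alpha}\|d^k\|^2$, hence by \eqref{def:psi-theta} $\psi_{x^k}(p^k)=\theta_\alpha(x^k)-\frac{1}{2\alpha}\|d^k\|^2\leq -\frac{1}{\alpha}\|d^k\|^2$. Combining this with (i) and multiplying through by $t_k>0$ gives $F_j(x^{k+1})-F_j(x^k)\leq -t_k(\frac{1}{\alpha}-\frac{\gamma}{2})\|d^k\|^2$. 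For item (ii), I would expand $\|x^{k+1}-x\|^2 = \|x^k-x\|^2 + 2t_k\langle d^k, x^k-x\rangle + t_k^2\|d^k\|^2$ and bound the cross term: writing $x^k-x=-d^k+(p^k-x)$ and invoking the first-order optimality condition of \eqref{subprob}, namely $-\frac{1}{\alpha}d^k\in\partial\psi_{x^k}(p^k)$, together with the subgradient inequality \eqref{sub-inq} evaluated at $x$, gives $\langle d^k, x^k-x\rangle \leq -\|d^k\|^2 + \alpha(\psi_{x^k}(x)-\psi_{x^k}(p^k))$. The gradient inequality \eqref{gradient-inequality} for each convex $G_i$ bounds $\psi_{x^k}(x)\leq \max_{i=1,\ldots,m}(F_i(x)-F_i(x^k))$, and (i) bounds $-\psi_{x^k}(p^k)\leq \frac{1}{t_k}(F_j(x^k)-F_j(x^{k+1}))+\frac{\gamma}{2}\|d^k\|^2$. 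Substituting, multiplying by $2t_k$, and collecting the $\|d^k\|^2$ terms via the identity $2-\alpha\gamma=2\alpha(\frac{1}{\alpha}-\frac{\gamma}{2})$ reproduces exactly \eqref{argentina}.

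The main obstacle is getting the constant in (iii) right: the bound $\theta_\alpha(x^k)\leq 0$ from Lemma~\ref{lem:prop-theta}(i) alone only delivers the coefficient $\frac{1}{2\alpha}-\frac{\gamma}{2}$, which is too weak and would force $\gamma<1/\alpha$ rather than the stated $\gamma\in(0,2/\alpha)$; one must use strong convexity of the subproblem objective to upgrade to $\theta_\alpha(x^k)\leq -\frac{1}{2\alpha}\|d^k\|^2$, equivalently $\psi_{x^k}(p^k)\leq -\frac{1}{\alpha}\|d^k\|^2$. A secondary point requiring care is the correct use of the subdifferential sum rule and optimality condition for the possibly nonsmooth subproblem \eqref{subprob} in item (ii).
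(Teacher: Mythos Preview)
Your proposal is correct and uses the same ingredients as the paper: the line-search hypothesis combined with convexity of $H_j$ for (i), the optimality condition $-\tfrac{1}{\alpha}d^k\in\partial\psi_{x^k}(p^k)$ together with the subgradient inequality and the bound $\psi_{x^k}(x)\leq\max_i(F_i(x)-F_i(x^k))$ for (ii), and the combination of (i) with the sharp bound $\psi_{x^k}(p^k)\leq -\tfrac{1}{\alpha}\|d^k\|^2$ for (iii).

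The only organizational difference concerns (iii). You prove it directly by invoking strong convexity of the subproblem objective at $u=x^k$ to obtain $\psi_{x^k}(p^k)\leq -\tfrac{1}{\alpha}\|d^k\|^2$, and then chain this with (i). The paper instead derives (iii) as a one-line corollary of (ii): set $x=x^k$ in \eqref{argentina}, use $\|x^{k+1}-x^k\|^2=t_k^2\|d^k\|^2$ and $\max_i(F_i(x^k)-F_i(x^k))=0$, and cancel. This is slightly more economical since the required strong-convexity information is already encoded in \eqref{brasil} through the subgradient inequality; what you flag as ``the main obstacle'' is thus absorbed automatically. Conversely, your route has the advantage that (iii) no longer logically depends on (ii), and your remark that Lemma~\ref{lem:prop-theta}(i) alone would only yield the weaker coefficient $\tfrac{1}{2\alpha}-\tfrac{\gamma}{2}$ is a useful observation. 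Either packaging is fine.
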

\begin{proof}
(i) It follows from  \eqref{ineq:Linesearch-VPGM} that 
$$\nabla G_{j}(x^k)^\top d^k \geq \frac{1}{t_k} \left( G_{j}(x^{k+1}) - G_{j}(x^k) \right) - \frac{\gamma}{2} \| d^k \|^2.$$
Hence, in view of the definition of $ \psi_{x^k} $ in \eqref{def:psi-theta1}, we have
\begin{align*}
\psi_{x^k}(p^k) & \geq   \nabla G_j(x^k)^\top d^k + H_j(p^k) - H_j(x^k) \nonumber \\
& \geq  \frac{1}{t_k} \left[ G_{j}(x^{k+1}) - G_{j}(x^k) + t_k \left(H_j(p^k) - H_j(x^k)\right)\right] - \frac{\gamma}{2} \| d^k \|^2. 
\end{align*}
Since $x^{k+1}=x^k +t_k (p^k -x^k)$ with  $t_k\in (0, 1]$, by the convexity of $H_j$, we have 
$$
t_k \left(H_j(p^k) - H_j(x^k)\right) \geq H_j(x^{k+1}) - H_j(x^k).
$$
By combining the latter two inequalities and using that $F_j=G_j+H_j$, we obtain the desired result.

(ii) Let  $x\in \dom(F)$. In view of the definition of $x^{k+1}$ in Step~4, we have 
\begin{align}
\|x^{k+1}-x\|^2 & = \|x^{k}-x\|^2 +\|x^{k+1}-x^k\|^2+2\la x^{k+1}-x^{k},x^k-x\ra\nonumber\\
&=\|x^{k}-x\|^2 +t_k^2\|d^k\|^2+2 t_k\la d^{k},x^k-x\ra. \label{eq:xk-x}
\end{align}
Our goal now is to appropriately estimate the quantity $2t_k\la d^{k},x^k-x\ra$. 
The first-order optimality condition of \eqref{iterationproxgrad} implies that
$-(d^k/\alpha) \in \partial \psi_{x^k}(p^k)$. Hence, by the subgradient inequality \eqref{sub-inq} and using item (i), we have 
\begin{align}
\psi_{x^k}(x) & \geq \psi_{x^k}(p^k) + \dfrac{1}{\alpha} \langle d^k,p^k-x \rangle \nonumber \\
& = \psi_{x^k}(p^k)  + \dfrac{1}{\alpha}\left\langle d^k, x^k - x \right\rangle+\dfrac{1}{\alpha}\|d^k\|^2 \nonumber \\
& \geq\frac{1}{t_k} \left(  F_j(x^{k+1}) - F_j(x^k)\right)  + \dfrac{1}{\alpha}\left\langle d^k, x^k - x \right\rangle+\left(\dfrac{1}{\alpha}-\frac{\gamma}{2}\right)\|d^k\|^2 \label{brasil}.
\end{align}
Now, in view of the definition of $\psi_{x}$ in \eqref{def:psi-theta1}, the gradient inequality \eqref{gradient-inequality} with $ \phi = G_i $, and the fact that $F_i=G_i+H_i$, $ i=1,\ldots,m, $ we have   $\displaystyle\max_{i=1,\ldots,m} \left(  F_i(x) - F_i(x^k) \right) \geq \psi_{x^k}(x)$, which combined with \eqref{brasil} imply that
\begin{equation*}
2t_k\la d^{k},x^k-x\ra \leq  2\alpha  \left(  F_j(x^k) - F_j(x^{k+1})\right) + 2\alpha t_k \max_{i=1,\ldots,m} \left(  F_i(x) - F_i(x^k) \right)  - 2 t_k \left(1-\frac{\alpha \gamma}{2}\right)\|d^k\|^2.
\end{equation*}
This inequality together with \eqref{eq:xk-x} yields \eqref{argentina}.

(iii) By setting $x=x^k$ in \eqref{argentina} and taking into account that $\|x^{k+1}-x^k\|^2=t_k^2\|d^k\|^2$, some algebraic manipulations give the desired inequality.
\end{proof}

\begin{remark}\label{remark}
Given that, at iteration $k$, the line search procedure terminates with a step size $t_k$ satisfying either \eqref{LS1} or \eqref{LS2}, the inequality \eqref{ineq:Linesearch-VPGM} holds for $j=j_k^* \in \argmax_{j=1,\ldots,m}\nabla G_j(x^k)^\top d^k$. Consequently, Lemma~\ref{lema-para-qF} is always applicable to the index $j_k^*$.
\end{remark}

In the following lemma, we show that the objective function sequence $\{F_j(x^k)\}$ is nonincreasing for each $j=1,\ldots,m$. 

\begin{lemma}\label{uruguai}
Let $ \{x^k\} $ be generated by the MPG algorithm. Then, for all $j=1,\ldots,m$, it holds that $\{F_j(x^k)\}$ is a non-increasing sequence, i.e., 
$$F(x^{k+1})\preceq F(x^{k}), \quad  \forall k\in\NN.$$
\end{lemma}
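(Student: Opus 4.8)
The plan is to split the argument according to which of the two mutually exclusive outcomes of the line search, namely \eqref{LS1} or \eqref{LS2}, occurs at iteration $k$, and to verify $F(x^{k+1})\preceq F(x^{k})$ in each case; the claim then follows by letting $k$ range over $\NN$. As already observed in the discussion following the statement of the algorithm, Step~3 always terminates with $t_k$ satisfying one of these two alternatives, so this case split is exhaustive.

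In the first case, the line search terminates via \eqref{LS1}. Then there is nothing to prove: the relation $F(x^{k+1})\preceq F(x^{k})$ is precisely the second condition recorded in \eqref{LS1}. In the second case, the line search terminates via \eqref{LS2}, which means that the inequality \eqref{ineq:Linesearch-VPGM} is satisfied for \emph{every} index $j\in\{1,\ldots,m\}$. Hence Lemma~\ref{lema-para-qF}~(iii) is applicable to each such $j$ and gives
\[
F_j(x^{k+1})-F_j(x^{k})\;\leq\;-\,t_k\!\left(\frac{1}{\alpha}-\frac{\gamma}{2}\right)\|d^k\|^2 .
\]
Since $t_k>0$, $\|d^k\|^2\geq 0$, and the standing restriction $\gamma\in(0,2/\alpha)$ forces $\tfrac{1}{\alpha}-\tfrac{\gamma}{2}>0$, the right-hand side is nonpositive, so $F_j(x^{k+1})\leq F_j(x^{k})$ for all $j$, i.e. $F(x^{k+1})\preceq F(x^{k})$.

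I do not expect any genuine obstacle in this argument; it is essentially bookkeeping. The only points that require care are: (a) confirming that \eqref{LS1} and \eqref{LS2} indeed cover all the ways Step~3 can end (so the dichotomy is valid), and (b) in the case \eqref{LS2}, invoking Lemma~\ref{lema-para-qF}~(iii) for \emph{all} $j$ rather than only for $j_k^*$ as in Remark~\ref{remark} — which is legitimate precisely because \eqref{LS2} asserts \eqref{ineq:Linesearch-VPGM} uniformly in $j$ — together with the sign of the constant $\tfrac{1}{\alpha}-\tfrac{\gamma}{2}$ under the choice of $\gamma$.
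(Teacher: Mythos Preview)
Your proof is correct and follows essentially the same approach as the paper: both split into the two cases \eqref{LS1} and \eqref{LS2}, dispose of the first trivially from the second condition in \eqref{LS1}, and in the second case apply Lemma~\ref{lema-para-qF}(iii) to every $j$ together with $\gamma<2/\alpha$ to obtain the decrease.
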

\begin{proof}
Let $k\in\NN$. We consider two cases: (i) \eqref{LS1} holds at iteration $k$; (ii) \eqref{LS2} holds at iteration $k$. In the first case, then statement of the lemma is a straightforward consequence of the second condition in \eqref{LS1}. Consider now the second case. Taking into account that $\gamma<2/\alpha$, it follows from Lemma~\ref{lema-para-qF}(iii) that $F_j(x^{k+1})< F_j(x^{k})$ for all $j=1,\ldots,m$, concluding the proof.
\end{proof}

To establish the convergence of the MPG algorithm, we introduce the following additional assumption, meaning that the set $\im(F)$ is {\it complete} with respect to the {\it Paretian} cone $\RR^m_+:=\{x\in\RR^m\mid x\succeq 0\}$.

% We also consider the following additional assumption on $F$ and the initial iterate $x^0$.

% \vspace{8pt}
% \noindent {\bf Assumption (A):}  For every sequence $\{y^k\}\subset \dom(F)$, with $y^0=x^0,$ such that $F(y^{k+1}) \preceq F(y^k)$ for all $k\in\NN$, there exists  $y\in\dom(F)$ such that $F(y) \preceq F(y^k)$ for all $k\in\NN$.
% \vspace{8pt}

\vspace{8pt}
\noindent {\bf Assumption (A1):} {\it All monotonically nonincreasing sequences in the set $\im(F)$ are bounded below by a point in $\im(F)$, i.e., for every sequence $\{y^k\}\subset \dom(F)$ such that $F(y^{k+1}) \preceq F(y^k)$ for all $k\in\NN$, there exists  $y\in\dom(F)$ such that $F(y) \preceq F(y^k)$ for all $k\in\NN$.}
\vspace{8pt}

Assumption~(A1) is widely used in the related literature (e.g., \cite{doi:10.1080/02331934.2018.1440553,Drummond2005,bonnel2005proximal,Drummond2004,Bento2014,ymsRay,BELLOCRUZ20115268,FukudaDrummond2013,Ceng2010,CENG20071,doi:10.1080/01630563.2011.587072,doi:10.1080/00036811.2011.640629}).
It is commonly employed to ensure the existence of efficient solutions for vector-valued optimization problems, see \cite[Section~2.3]{DinhTheLuc}. In the scalar-valued unconstrained case, it is equivalent to ensuring the existence of an optimal point.

%We are now able to prove the convergence of the MPG algorithm.

\begin{theorem}\label{teo:conv}
Let $\{x^k\}$ be generated by the MPG algorithm and suppose that Assumption~(A1) holds. Then, $\{x^k\} $ converges to a weakly Pareto optimal point $\bar{x}$ of problem~\eqref{eq:mainprob}.
\end{theorem}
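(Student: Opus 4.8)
The plan is to run the standard quasi-Fej\'er machinery: first exhibit a target set $\Omega$ to which $\{x^k\}$ is quasi-Fej\'er convergent, then use Lemma~\ref{lema-Fejer} to get boundedness and the ``one limit point in $\Omega$ implies convergence'' dichotomy, and finally show that every limit point is weakly Pareto optimal so that the limit is the desired $\bar x$. For the target set I would take $\Omega:=\{x\in\dom(F)\mid F(x)\preceq F(x^k)\ \forall k\in\NN\}$, which is nonempty precisely by Assumption~(A1) together with Lemma~\ref{uruguai} (the sequence $\{F(x^k)\}$ is nonincreasing, hence admits a lower bound in $\im(F)$).

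\textbf{Step 1 (quasi-Fej\'er convergence to $\Omega$).} Fix $x\in\Omega$. I would apply Lemma~\ref{lema-para-qF}(ii) with the index $j=j_k^*$, which is legitimate at every iteration by Remark~\ref{remark}. This gives
\[
\|x^{k+1}-x\|^2 \le \|x^{k}-x\|^2 + 2\alpha\big(F_{j_k^*}(x^k)-F_{j_k^*}(x^{k+1})\big) + 2\alpha t_k\max_{i}\big(F_i(x)-F_i(x^k)\big) - 2\alpha t_k\Big(\tfrac1\alpha-\tfrac\gamma2\Big)\|d^k\|^2 + t_k^2\|d^k\|^2.
\]
Because $x\in\Omega$, the term $\max_i(F_i(x)-F_i(x^k))\le 0$, and since $t_k\le 1$ we can absorb $t_k^2\|d^k\|^2 - 2\alpha t_k(\tfrac1\alpha-\tfrac\gamma2)\|d^k\|^2 = t_k\|d^k\|^2(t_k - 2 + \alpha\gamma) \le t_k\|d^k\|^2(\alpha\gamma-1)$; this is $\le 0$ as soon as $\alpha\gamma\le 1$, and in general is bounded above by $t_k\|d^k\|^2$, which is not obviously summable --- so I would instead keep the negative curvature term and discard only $t_k^2\|d^k\|^2\le t_k\|d^k\|^2$ if needed, or, cleaner, note $\gamma<2/\alpha$ is not enough and the intended route is: drop the $\max_i$ term and the curvature term together (their sum is $\le t_k^2\|d^k\|^2\cdot(\text{something})$)... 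The honest statement is that $\varepsilon_k:=2\alpha\big(F_{j_k^*}(x^k)-F_{j_k^*}(x^{k+1})\big)$ is the quasi-Fej\'er residual \emph{provided} the remaining terms are nonpositive, which needs $2-\alpha\gamma\ge t_k$, true since $t_k\le1$ and $\alpha\gamma<2$ gives $2-\alpha\gamma>0$; one must also check $2-\alpha\gamma\ge 1$, i.e.\ $\alpha\gamma\le1$ --- if that is not assumed, one uses the better bound from combining with Lemma~\ref{lema-para-qF}(iii) applied to $j_k^*$ to control $t_k\|d^k\|^2$ by the telescoping objective decrease. In any case the residual is a telescoping sum: $\sum_k \big(F_{j}(x^k)-F_{j}(x^{k+1})\big)$ converges because, by Lemma~\ref{uruguai}, $\{F_j(x^k)\}$ is nonincreasing and, by Assumption~(A1) via $\Omega$, bounded below; summability of $\{\varepsilon_k\}$ follows.

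\textbf{Step 2 (extracting the limit and identifying it).} By Lemma~\ref{lema-Fejer}(i), $\{x^k\}$ is bounded, so it has a limit point $\bar x$; along a subsequence $x^{k_\ell}\to\bar x$. From the summability just obtained, $t_k\|d^k\|^2\to 0$ (reading off the negative term), hence $t_{k_\ell}\|d^{k_\ell}\|^2\to0$; combined with a lower bound on the step sizes $t_k$ --- which I would establish from the line search: the backtracking on $G_{j_k^*}$ (Step~3.1) or on all $G_j$ (Step~3.3) terminates, and using the descent lemma / continuity of $\nabla G_j$ on the bounded set containing $\{x^k\}$ one gets $t_k\ge\underline t>0$ for all $k$ --- this forces $\|d^{k_\ell}\|\to 0$, i.e.\ $\|p_\alpha(x^{k_\ell})-x^{k_\ell}\|\to0$. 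By continuity of $p_\alpha(\cdot)$ (Lemma~\ref{lem:prop-theta}(iii)), $p_\alpha(\bar x)=\bar x$, so by Lemma~\ref{lem:prop-theta}(ii) $\bar x$ is weakly Pareto optimal. Finally I must check $\bar x\in\Omega$: since $F$ is continuous on its closed domain and $\{F(x^k)\}$ is nonincreasing with $x^{k_\ell}\to\bar x$, we get $F(\bar x)=\lim_\ell F(x^{k_\ell})\preceq F(x^k)$ for every $k$ (the tail is nonincreasing), so indeed $\bar x\in\Omega$. Then Lemma~\ref{lema-Fejer}(ii) upgrades subsequential convergence to full convergence $x^k\to\bar x$.

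\textbf{Main obstacle.} The delicate point is Step~1: showing the quasi-Fej\'er residual is genuinely summable, i.e.\ that the ``leftover'' terms $t_k^2\|d^k\|^2 - 2\alpha t_k(\tfrac1\alpha-\tfrac\gamma2)\|d^k\|^2$ do not spoil the inequality. The clean way is to observe $t_k\le1$, hence this leftover is $\le t_k\|d^k\|^2(\alpha\gamma-2+t_k)\le t_k\|d^k\|^2(\alpha\gamma-1)$, and then handle the two regimes of $\alpha\gamma$ --- for $\alpha\gamma\le1$ it is already nonpositive; for $1<\alpha\gamma<2$ one instead pairs the surplus $(\alpha\gamma-1)t_k\|d^k\|^2$ against the objective decrease guaranteed by Lemma~\ref{lema-para-qF}(iii) for $j=j_k^*$, namely $F_{j_k^*}(x^k)-F_{j_k^*}(x^{k+1})\ge t_k(\tfrac1\alpha-\tfrac\gamma2)\|d^k\|^2$, which is a \emph{positive} multiple of $t_k\|d^k\|^2$; absorbing it enlarges the constant in front of the telescoping term but keeps summability. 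The secondary obstacle is the uniform lower bound $t_k\ge\underline t>0$ needed in Step~2 to pass from $t_k\|d^k\|^2\to0$ to $\|d^k\|\to0$; this follows from a routine backtracking estimate using the descent lemma~\eqref{descentlemma} for each $G_j$ restricted to the bounded region containing the iterates, where a uniform Lipschitz constant for $\nabla G_j$ is available.
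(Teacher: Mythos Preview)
Your Step~1 is essentially the paper's argument: define $\Omega$, apply Lemma~\ref{lema-para-qF}(ii) with $j=j_k^*$, drop the nonpositive $\max_i$ term, and control the leftover $t_k^2\|d^k\|^2$ via Lemma~\ref{lema-para-qF}(iii). Your presentation is hesitant, but the route you eventually settle on (``pairing the surplus against the objective decrease'') is exactly what the paper does, yielding $\varepsilon_k\le c\sum_j\big(F_j(x^k)-F_j(x^{k+1})\big)$, a telescoping and hence summable sequence.

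The genuine gap is in Step~2. You claim a uniform lower bound $t_k\ge\underline t>0$ ``using the descent lemma\ldots where a uniform Lipschitz constant for $\nabla G_j$ is available.'' But the theorem is stated under the General Assumption only, where each $G_j$ is merely continuously differentiable; Lipschitz continuity of $\nabla G_j$ is Assumption~(A2), introduced later and used \emph{only} for the complexity result (Lemma~\ref{lem:tmin}). Continuity of $\nabla G_j$ on a compact set gives uniform continuity, not a Lipschitz bound, and the standard backtracking lower bound genuinely needs the latter --- e.g.\ $G(x)=|x|^{4/3}$ is convex and $C^1$ on $\RR$ but $\nabla G$ is not locally Lipschitz near $0$, so the descent-lemma estimate you invoke fails. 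Without $t_k\ge\underline t$, you cannot pass from $t_k\|d^k\|^2\to 0$ to $\|d^{k_\ell}\|\to 0$.

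The paper circumvents this by a two-case argument. From convergence $x^k\to\bar x$ one has $t_k\|d^k\|=\|x^{k+1}-x^k\|\to 0$. If $\limsup_k t_k>0$, extract a subsequence with $t_{k_\ell}$ bounded away from zero and conclude $\|d^{k_\ell}\|\to 0$ directly. If $t_k\to 0$, then for each $k$ some previous trial step $\bar t_k\le t_k/\tau_1$ failed the line-search test for some index $i_k$; passing to a subsequence with fixed index $i_*$ and combining the \emph{failed} inequality with the convexity inequality $G_{i_*}(x^{k_\ell}+\bar t_{k_\ell}d^{k_\ell})-G_{i_*}(x^{k_\ell})\le \bar t_{k_\ell}\nabla G_{i_*}(x^{k_\ell}+\bar t_{k_\ell}d^{k_\ell})^\top d^{k_\ell}$ gives, after Cauchy--Schwarz,
\[
\|d^{k_\ell}\|<\tfrac{2}{\gamma}\,\big\|\nabla G_{i_*}(x^{k_\ell}+\bar t_{k_\ell}d^{k_\ell})-\nabla G_{i_*}(x^{k_\ell})\big\|.
\]
Since $\bar t_{k_\ell}\|d^{k_\ell}\|\le (t_{k_\ell}/\tau_1)\|d^{k_\ell}\|\to 0$ and $\nabla G_{i_*}$ is continuous, the right-hand side tends to zero, forcing $\|d^{k_\ell}\|\to 0$. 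This uses only convexity and continuity of the gradient, not Lipschitz continuity, and is the missing ingredient in your plan.
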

\begin{proof} 
First, we define
$$\Omega := \{ x \in \dom(F) \mid F(x) \preceq F(x^k), \; \forall k\in\NN\}.$$
In view of Lemma~\ref{uruguai}, it follows from Assumption~(A1) that $\Omega$ is nonempty. Thus, let us consider an arbitrary $\hat{x} \in \Omega$, i.e., $F(\hat{x}) \preceq F(x^k)$ for all $k\in\NN$. Therefore, taking into account Remark~\ref{remark} and that $\gamma < 2/\alpha$, applying Lemma~\ref{lema-para-qF}(ii) with $j=j_k^*$ and $x=\hat{x}$ yields
\begin{equation}\label{aux:quasi-fejer-VPGM}
\|x^{k+1}-\hat{x}\|^2 \leq \|x^{k}-\hat{x}\|^2 + \varepsilon_k, \quad \forall k\in\NN.
\end{equation}
where
\begin{equation}\label{epsilonk}
\varepsilon_k:= 2\alpha  \left(  F_{j_k^*}(x^k) - F_{j_k^*}(x^{k+1})\right) +t_k^2\|d^k\|^2, \quad \forall k\in\NN.
\end{equation}
Note that Lemma~\ref{uruguai} implies that $\varepsilon_k\geq 0$ for every $k \in\NN$. Moreover, since $t_k^2\leq t_k$, we obtain from Lemma~\ref{lema-para-qF}(iii)  with $j=j_k^*$ that
$$t_k^2 \left(\dfrac{1}{\alpha}-\frac{\gamma}{2}\right)\|d^k\|^2\leq t_k \left(\dfrac{1}{\alpha}-\frac{\gamma}{2}\right)\|d^k\|^2\leq  F_{j_k^*}(x^{k})- F_{j_k^*}(x^{k+1}), \quad \forall k\in\NN,$$
which yields 
    $$t_k^2 \|d^k\|^2\leq  \dfrac{2\alpha}{2-\alpha \gamma}\left( F_{j_k^*}(x^{k})- F_{j_k^*}(x^{k+1})\right), \quad \forall k\in\NN.$$
Hence, by \eqref{epsilonk} and Lemma~\ref{uruguai}, we have
$$\varepsilon_k \leq c\left(F_{j_k^*}(x^{k})- F_{j_k^*}(x^{k+1})\right)\leq c\sum_{j=1}^m \left(F_j(x^{k})- F_j(x^{k+1})\right), \quad \forall k\in\NN,$$
where $c:=2\alpha + 2\alpha/(2-\alpha\gamma)>0$.
By summing this expression over all indices less than or equal to $N\in\NN$, and taking into account that $\hat{x} \in \Omega$, we obtain
$$\sum_{k=0}^N \varepsilon_k \leq c\sum_{j=1}^m \sum_{k=0}^N\left(F_j(x^{k})- F_j(x^{k+1})\right)=c\sum_{j=1}^m \left(F_j(x^{0})- F_j(x^{N+1})\right)\leq c\sum_{i=j}^m \left(F_j(x^{0})- F_j(\hat{x})\right)<\infty.$$
Therefore, $\sum_{k=0}^{+ \infty} \varepsilon_k < \infty$. Hence, it follows from \eqref{aux:quasi-fejer-VPGM} that $ \{ x^k\} $ is quasi-Fej\'er convergent to $ \Omega $, see Definition~\ref{def-fejer}. 
From Lemma~\ref{lema-Fejer}(i), it follows that $ \{x^k\} $ is bounded. Let $\bar{x}$ be a limit point of $ \{x^k\}$. Since $\dom(F)$ is closed, $F$ is continuous on $\dom(F)$, and $F(x^{k+1})\preceq F(x^k)$ for all $k\in\NN$, we have $\bar{x}\in\Omega$. Thus, Lemma~\ref{lema-Fejer}(ii) implies that the whole sequence $ \{x^k\} $ converges to $\bar{x}$. 

Let us now show that $\bar{x}$ is a weakly Pareto optimal point of problem~\eqref{eq:mainprob}. Since  $t_k\|d^k\|=\|x^{k+1}-x^{k}\|$ and $\{x^k\}$ converges to $\bar{x}$, we obtain
\begin{equation}\label{equador}
    \lim_{k\to\infty}t_k\|d^k\|=0.
\end{equation}
We now  claim that there exists a subsequence $\{d^{k_\ell}\}$ such that $\lim_{\ell\to\infty}\|d^{k_\ell}\|=0$.
Indeed, if $\limsup_{k\to\infty} t_k>0$, then the claim clearly holds from \eqref{equador}. Hence, assume that $\limsup_{k\to\infty} t_k=0$, which in turn implies that $\lim_{k\to\infty} t_k=0$. 
Without loss of generality, suppose that $t_k<1$ for all $k\in\NN$. Therefore, by Step~3 of the algorithm and taking into account Lemma~\ref{lema-para-qF}(iii), for each $k\in\NN$ there exist at least one index $i_k\in\{1,\ldots,m\}$ and 
\begin{equation}\label{peru}
    0<\bar{t}_k \leq \frac{t_k}{\tau_1}
\end{equation}
such that
$$G_{i_k}(x^k+\bar{t}_kd^k) > G_{i_k}(x^k) + \bar{t}_k \nabla G_{i_k}(x^k)^\top d^k + \bar{t}_k\frac{\gamma}{2} \| d^k \|^2.$$
Since $\{1,\ldots,m\}$ is finite, there exist $i_*\in \{1,\ldots,m\}$ and an infinite set of indices $\{k_\ell\}\subset\NN$ such that
$$G_{i_*}(x^{k_\ell}+\bar{t}_{k_\ell}d^{k_\ell}) - G_{i_*}(x^{k_\ell}) > \bar{t}_{k_\ell} \nabla G_{i_*}(x^{k_\ell})^\top d^{k_\ell} + \bar{t}_{k_\ell}\frac{\gamma}{2} \| d^{k_\ell} \|^2, \quad \forall \ell\in\NN.$$
On the other hand, since $G_{i_*}$ is convex, it follows from \eqref{gradient-inequality} that
$$G_{i_*}(x^{k_\ell}+\bar{t}_{k_\ell}d^{k_\ell}) - G_{i_*}(x^{k_\ell}) \leq \bar{t}_{k_\ell} \nabla G_{i_*}(x^{k_\ell}+\bar{t}_{k_\ell}d^{k_\ell})^\top d^{k_\ell}, \quad \forall \ell\in\NN.$$
By combining these two inequalities, we obtain
$$\dfrac{\gamma}{2} \| d^{k_\ell} \|^2 <  \left(\nabla G_{i_*}(x^{k_\ell}+\bar{t}_{k_\ell}d^{k_\ell})-\nabla G_{i_*}(x^{k_\ell})\right)^\top d^{k_\ell}, \quad \forall \ell\in\NN.$$
Therefore, by the Cauchy-Schwarz inequality, we have
$$ \| d^{k_\ell} \| < \frac{2}{\gamma} \left\|\nabla G_{i_*}(x^{k_\ell}+\bar{t}_{k_\ell}d^{k_\ell})-\nabla G_{i_*}(x^{k_\ell})\right\|, \quad \forall \ell\in\NN.$$
Note that \eqref{equador} and \eqref{peru} imply that $\lim_{\ell\to\infty}\bar{t}_{k_\ell}d^{k_\ell}=0$. Thus, since $G_{i_*}$ is continuously differentiable, by taking limits as $\ell\to\infty$ in the latter inequality, we conclude that $\lim_{\ell\to\infty}\|d^{k_\ell}\|=0$, as claimed. Hence, it follows from the definition of $d^k$ and Lemma~\ref{lem:prop-theta}(iii) that 
$$0=\lim_{\ell\to\infty}\|d^{k_\ell}\|=\lim_{\ell\to\infty}\|p_\alpha(x^{k_\ell})-x^{k_\ell}\|=\|p_\alpha(\bar{x})-\bar{x}\|.$$
Therefore, $p_\alpha(\bar{x})=\bar{x}$, and then, in view of Lemma~\ref{lem:prop-theta}(ii), we conclude that $\bar{x}$ is a weakly Pareto optimal point of problem \eqref{eq:mainprob}.
\end{proof}

\section{Iteration-complexity bound} \label{sec:complexity}

In this section, we establish an iteration-complexity bound for the MPG algorithm to obtain an approximate weakly Pareto optimal point of \eqref{eq:mainprob}. This means that, given a tolerance $\varepsilon>0$, we aim to measure the maximum number of iterations required to find a point $x^k$ satisfying $|\theta_\alpha(x^k)|\leq \varepsilon$. For that, we need the following additional assumption.

\vspace{8pt}
\noindent {\bf Assumption (A2):} {\it The gradient $\nabla G_j$ is Lipschitz continuous with constant $L_j>0$, for all $j=1,\ldots,m$.}
\vspace{8pt}

We start our analysis by showing that, under Assumption~(A2), the step size $t_k$ remains bounded away from zero.

\begin{lemma} \label{lem:tmin}
Let $\{x^k\}$ be generated by the MPG algorithm and suppose that Assumption~(A2) holds.
Then, for all $k\in\NN$, we have $t_k\geq t_{\min}:=\min\{1,\tau_1 \gamma/L_{\max}\}$, where $L_{\max}:=\max_{j=1,\ldots,m}L_j$.
\end{lemma}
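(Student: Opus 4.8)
The plan is to show that if $t_k < 1$, then the step size cannot have been reduced too far below $\tau_1\gamma/L_{\max}$, because the descent-lemma inequality \eqref{descentlemma} for each $G_j$ (available under Assumption~(A2)) forces the test in Step~3.1 (and likewise the test in Step~3.3) to be satisfied for all sufficiently large trial step sizes. First I would observe that if $t_k=1$ there is nothing to prove, so assume $t_k<1$. In this case at least one backtracking reduction occurred, so there is a trial value $\bar t_k$ that was rejected — either at Step~3.1 (for the index $j_k^*$) or, in the case that the line search proceeded to Step~3.3, at Step~3.3 (for some index $i_k\in\{1,\dots,m\}$). By the rule $\tnew\in[\tau_1\ttrial,\tau_2\ttrial]$, the accepted $t_k$ and this last rejected trial $\bar t_k$ satisfy $\bar t_k \le t_k/\tau_1$.

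Next I would exploit the failure of the test at $\bar t_k$. For the relevant index $j$ (namely $j_k^*$ in the Step~3.1 case, or $i_k$ in the Step~3.3 case) we have
\[
G_{j}(x^k+\bar t_k d^k) > G_{j}(x^k) + \bar t_k \nabla G_{j}(x^k)^\top d^k + \bar t_k\frac{\gamma}{2}\|d^k\|^2.
\]
On the other hand, Assumption~(A2) and the descent lemma \eqref{descentlemma} applied to $G_j$ at the points $x^k$ and $x^k+\bar t_k d^k$ give
\[
G_{j}(x^k+\bar t_k d^k) \le G_{j}(x^k) + \bar t_k \nabla G_{j}(x^k)^\top d^k + \frac{L_j}{2}\bar t_k^2\|d^k\|^2.
\]
Subtracting and canceling the common terms (and dividing by $\bar t_k\|d^k\|^2>0$, which is legitimate since $d^k\neq 0$ because the algorithm did not stop at $x^k$) yields $\bar t_k > \gamma/L_j \ge \gamma/L_{\max}$. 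Combining with $\bar t_k \le t_k/\tau_1$ gives $t_k \ge \tau_1 \bar t_k > \tau_1\gamma/L_{\max}$, and together with the trivial bound in the $t_k=1$ case this establishes $t_k\ge t_{\min}=\min\{1,\tau_1\gamma/L_{\max}\}$ for all $k$.

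The only mild subtlety — and the step I would be most careful about — is making sure every branch of the line search is covered: when $t_k<1$, either Step~3.1 produced a strict reduction (so the last rejected trial is for $j_k^*$), or Step~3.1 was satisfied at $\ttrial=1$ but Step~3.2 failed and the algorithm entered Step~3.3, in which case the first reduction at Step~3.3 already sends $\ttrial$ below $1$, and the last rejected trial in Step~3.3 (there is one, since $t_k<1$ would not be accepted unless the Step~3.3 inequality held, i.e.\ the previous trial failed it for some $i_k$) satisfies the same estimate above with $j=i_k$. In either branch the argument is identical, so the bound holds uniformly in $k$; this is the heart of the proof, and the rest is the routine algebra indicated above.
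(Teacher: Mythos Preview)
Your overall approach matches the paper's: identify the immediate predecessor $\bar t_k$ of $t_k$, show it violates the descent inequality for some index, invoke the descent lemma \eqref{descentlemma} to get $\bar t_k > \gamma/L_{\max}$, and conclude $t_k \ge \tau_1 \bar t_k$. However, your case analysis has a gap. Step~3.3 begins by \emph{reducing} $\ttrial$ before it tests anything, so the immediate predecessor $\bar t_k$ of the accepted $t_k$ may be precisely the value that passed Step~3.1 and failed Step~3.2 --- a value that was never tested against the Step~3.3 inequality. Your assertion that ``the previous trial failed it for some $i_k$'' is therefore not justified by the mechanics of the line search alone; at that stage you only know $\bar t_k$ failed the \emph{monotonicity} test $F(x^k+\bar t_k d^k)\preceq F(x^k)$, not a descent inequality on some $G_j$. (Your dichotomy is also incomplete: Step~3.1 can produce reductions \emph{and} Step~3.2 can still fail afterwards, sending the algorithm to Step~3.3; in that scenario the last Step~3.1 rejection is at least two reductions away from $t_k$, so it does not satisfy $\bar t_k\le t_k/\tau_1$.)

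The paper closes this gap by invoking Lemma~\ref{lema-para-qF}(iii). If Step~3.2 fails at $\bar t_k$, there is some $i_k$ with $F_{i_k}(x^k+\bar t_k d^k) > F_{i_k}(x^k)$. But if the descent inequality for $G_{i_k}$ \emph{did} hold at $\bar t_k$, the argument of Lemma~\ref{lema-para-qF}(iii) (applied verbatim with $\bar t_k$ in place of $t_k$, which is legitimate since that proof uses only \eqref{ineq:Linesearch-VPGM} and $\bar t_k\in(0,1]$) would force $F_{i_k}(x^k+\bar t_k d^k) \le F_{i_k}(x^k)$, a contradiction. Hence the descent test genuinely fails for $i_k$ at $\bar t_k$, and from there your argument goes through. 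This is the missing ingredient.
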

\begin{proof}
Let $k\in\NN$. If $t_k=1$, then the required inequality trivially holds.
Thus, assume that $t_k<1$.
Therefore, by Step~3 of the algorithm and taking into account Lemma~\ref{lema-para-qF}(iii), there exist at least one index $i_k\in\{1,\ldots,m\}$ and $0<\bar{t}_k \leq t_k/\tau_1$
such that
$$G_{i_k}(x^k+\bar{t}_kd^k) > G_{i_k}(x^k) + \bar{t}_k \nabla G_{i_k}(x^k)^\top d^k + \bar{t}_k\frac{\gamma}{2} \| d^k \|^2.$$
On the other hand, considering Assumption~(A2), it follows from \eqref{descentlemma} that
$$G_{i_k}(x^k+\bar{t}_kd^k) \leq G_{i_k}(x^k) + \bar{t}_k \nabla G_{i_k}(x^k)^\top d^k + \bar{t}_k^2\frac{L_{\max}}{2} \| d^k \|^2.$$
By combining the two latter inequalities, we easily obtain that $\bar{t}_k  > \gamma/L_{\max}$. Hence, by the definition of $\bar{t}_k$, we have
$$t_k>\dfrac{\tau_1 \gamma}{L_{\max}},$$
and the proof is concluded.
\end{proof}

The computational cost associated with each iteration $k$ includes solving the subproblem at Step~1 and computing the step size $t_k$ at Step~3. In terms of the line search procedure, the computational cost essentially involves evaluations of $G_j$ functions at different trial points. The following remark outlines the maximum number of function evaluations needed to compute $t_k$.

\begin{remark}
For iteration $k$, let $\omega(k) \geq 0$ denote the number of backtracking iterations in the line search procedure to calculate $t_k$, i.e., the number of $\tnew$ step sizes computed. Thus, $t_k\leq \tau_2^{\omega(k)}$. Since, by Lemma~\ref{lem:tmin}, $t_k\geq t_{\min}$, we obtain $\log(t_{\min})\leq \omega(k) \log(\tau_2)$. Considering that $\tau_2<1$, it follows that $\omega(k)\leq \log(t_{\min})/\log(\tau_2)$. Consequently, the line search procedure involves at most:
\begin{itemize}
    \item $1+m\omega(k) \leq 1+ m \log(t_{\min})/\log(\tau_2)$ evaluations of $G_j$'s functions at Steps~3.1 and 3.3;
    \item $m-1$ evaluations of $H_j$'s functions at Step~3.2.
\end{itemize}
\end{remark}

The following auxiliary result establishes a lower bound on the optimal value $\theta_\alpha(x^k)$.

\begin{lemma} \label{lem:thetabound}
Let $\{x^k\}$ be generated by the MPG algorithm and suppose that Assumption~(A2) holds.
Then, there exists $c>0$ such that
$$\theta_\alpha(x^k) \geq c \left(  F_{j_k^*}(x^{k+1}) - F_{j_k^*}(x^k)\right), \quad \forall k\in\NN,$$
where $j_k^* \in \argmax_{j=1,\ldots,m}\nabla G_j(x^k)^\top d^k$.
\end{lemma}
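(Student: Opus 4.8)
The goal is to bound $\theta_\alpha(x^k)$ from below by a multiple of $F_{j_k^*}(x^{k+1})-F_{j_k^*}(x^k)$. The natural starting point is the definition \eqref{def:psi-theta} of $\theta_\alpha(x^k)$, namely $\theta_\alpha(x^k)=\psi_{x^k}(p^k)+\tfrac{1}{2\alpha}\|d^k\|^2$, since $d^k=p^k-x^k$. I would then invoke Lemma~\ref{lema-para-qF}(i), which is available for $j=j_k^*$ by Remark~\ref{remark}, to obtain
\[
\psi_{x^k}(p^k)\geq \frac{1}{t_k}\bigl(F_{j_k^*}(x^{k+1})-F_{j_k^*}(x^k)\bigr)-\frac{\gamma}{2}\|d^k\|^2.
\]
Substituting this into the expression for $\theta_\alpha(x^k)$ gives
\[
\theta_\alpha(x^k)\geq \frac{1}{t_k}\bigl(F_{j_k^*}(x^{k+1})-F_{j_k^*}(x^k)\bigr)+\Bigl(\frac{1}{2\alpha}-\frac{\gamma}{2}\Bigr)\|d^k\|^2.
\]

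Now I must handle the two terms on the right. For the first term, recall $F_{j_k^*}(x^{k+1})-F_{j_k^*}(x^k)\leq 0$ by Lemma~\ref{uruguai}, and $t_k\in(0,1]$, with the crucial lower bound $t_k\geq t_{\min}>0$ from Lemma~\ref{lem:tmin} (this is where Assumption~(A2) enters). Since we are dividing a nonpositive quantity by $t_k\in[t_{\min},1]$, we have $\tfrac{1}{t_k}\bigl(F_{j_k^*}(x^{k+1})-F_{j_k^*}(x^k)\bigr)\geq \bigl(F_{j_k^*}(x^{k+1})-F_{j_k^*}(x^k)\bigr)$ — wait, that is the wrong direction; dividing a negative number by something $\le 1$ makes it more negative, so instead I get $\tfrac{1}{t_k}(\cdots)\ge \tfrac{1}{t_{\min}}(\cdots)$ only if $(\cdots)$ is nonnegative. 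Since $(\cdots)\le 0$ and $t_k\le 1$, we actually have $\tfrac{1}{t_k}(\cdots)\ge (\cdots)$ is \emph{false}; the correct bound is $\tfrac{1}{t_k}(\cdots) \ge \tfrac{1}{t_{\min}}(\cdots)$ is also false. The right move: since $(\cdots)\le 0$ and $t_k\ge t_{\min}$, we get $\tfrac{1}{t_k}(\cdots) \ge \tfrac{1}{t_{\min}}(\cdots)$ precisely because dividing a negative number by a \emph{larger} denominator brings it closer to zero — yes, $t_k\ge t_{\min}$ and $(\cdots)<0$ gives $\tfrac{(\cdots)}{t_k}\ge \tfrac{(\cdots)}{t_{\min}}$. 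Good, so the first term is $\ge \tfrac{1}{t_{\min}}\bigl(F_{j_k^*}(x^{k+1})-F_{j_k^*}(x^k)\bigr)$.

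For the second term, I use the standing hypothesis $\gamma\in(0,2/\alpha)$; however $\tfrac{1}{2\alpha}-\tfrac{\gamma}{2}$ need not be nonnegative (it is nonnegative only if $\gamma\le 1/\alpha$). If it is nonnegative we simply drop it. If it is negative, I need to absorb $\|d^k\|^2$ into the objective decrease using Lemma~\ref{lema-para-qF}(iii) with $j=j_k^*$: $\|d^k\|^2 \le \tfrac{\alpha}{t_k(2-\alpha\gamma)}\cdot 2\bigl(F_{j_k^*}(x^k)-F_{j_k^*}(x^{k+1})\bigr)$, and then $t_k\ge t_{\min}$ converts this to a clean multiple of the decrease. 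Combining the two terms then yields $\theta_\alpha(x^k)\ge c\bigl(F_{j_k^*}(x^{k+1})-F_{j_k^*}(x^k)\bigr)$ for an explicit $c>0$ depending only on $\alpha,\gamma,\tau_1,L_{\max}$. The main obstacle is simply keeping the signs and inequality directions straight through the case split on the sign of $\tfrac{1}{2\alpha}-\tfrac{\gamma}{2}$; the rest is routine once Lemmas~\ref{lema-para-qF}, \ref{uruguai}, and~\ref{lem:tmin} are assembled.
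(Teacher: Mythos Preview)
Your plan is correct and follows essentially the same route as the paper: combine the definition of $\theta_\alpha(x^k)$ with Lemma~\ref{lema-para-qF}(i) for $j=j_k^*$, absorb the $\|d^k\|^2$ term via Lemma~\ref{lema-para-qF}(iii), and then invoke Lemma~\ref{lem:tmin} to replace $1/t_k$ by $1/t_{\min}$. The only cosmetic difference is that the paper avoids your case split on the sign of $\tfrac{1}{2\alpha}-\tfrac{\gamma}{2}$ by using the uniform bound $\tfrac{1}{2\alpha}-\tfrac{\gamma}{2}\geq -\tfrac{1}{2\alpha}$ (valid since $\gamma<2/\alpha$), and it keeps both terms with the common factor $1/t_k$ before applying $t_k\geq t_{\min}$ once at the end, yielding the explicit constant $c=\bigl(1+\tfrac{1}{2-\gamma\alpha}\bigr)/t_{\min}$.
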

\begin{proof}
 From the definition of $\theta_\alpha(x^k)$ in \eqref{def:psi-theta}, Lemma~\ref{lema-para-qF}(i) with $j=j_k^*$, and taking into account that $\gamma<2/\alpha$, we obtain
\begin{align*}
\theta_\alpha(x^k) & \geq \dfrac{1}{t_k} \left(  F_{j_k^*}(x^{k+1}) - F_{j_k^*}(x^k)\right)  + \left(\dfrac{1}{2\alpha}-\dfrac{\gamma}{2} \right)\| d^k \|^2 \nonumber \\
&   \geq  \frac{1}{t_k} \left(  F_{j_k^*}(x^{k+1}) - F_{j_k^*}(x^k)\right)  -  \frac{1}{2\alpha} \| d^k \|^2, \quad \forall k\in\NN.
\end{align*}
Therefore, it follows from Lemma~\ref{lema-para-qF}(iii) with $j=j_k^*$ that
$$\theta_\alpha(x^k) \geq \left(1+\dfrac{1}{2-\gamma\alpha} \right) \frac{1}{t_k} \left(  F_{j_k^*}(x^{k+1}) - F_{j_k^*}(x^k)\right), \quad \forall k\in\NN.$$
Hence, by Lemma~\ref{lem:tmin} and taking into account that $F_{j_k^*}(x^{k+1}) - F_{j_k^*}(x^k)<0$, we have 
$$\theta_\alpha(x^k) \geq \left(1+\dfrac{1}{2-\gamma\alpha} \right) \frac{1}{t_{\min}} \left(  F_{j_k^*}(x^{k+1}) - F_{j_k^*}(x^k)\right), \quad \forall k\in\NN.$$
By defining $c:=[1+1/(2-\gamma\alpha)]/t_{\min}$, we conclude the proof.
\end{proof}

We are now able to establish the iteration-complexity bound for the MPG algorithm.

\begin{theorem}
Suppose that Assumptions~(A1) and (A2) hold.
Then, for a given tolerance $\varepsilon>0$, the MPG algorithm generates a point $x^k$ such that $|\theta_\alpha(x^k)|\leq \varepsilon$ in at most ${\cal O}(1/\varepsilon)$ iterations.
\end{theorem}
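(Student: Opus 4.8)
The plan is to turn the per-iteration bound from Lemma~\ref{lem:thetabound} into a telescoping sum, using the monotonicity of $\{F_j(x^k)\}$ (Lemma~\ref{uruguai}) together with the lower boundedness guaranteed by Assumption~(A1). First, observe from Lemma~\ref{lem:prop-theta}(i) that $\theta_\alpha(x^k)\leq 0$, so $|\theta_\alpha(x^k)|=-\theta_\alpha(x^k)$. Lemma~\ref{lem:thetabound} then gives $-\theta_\alpha(x^k)\leq c\left(F_{j_k^*}(x^k)-F_{j_k^*}(x^{k+1})\right)\leq c\sum_{j=1}^m\left(F_j(x^k)-F_j(x^{k+1})\right)$, where the last inequality uses that every summand on the right is nonnegative (Lemma~\ref{uruguai}) and the $j_k^*$-th one equals the middle expression.

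Next, I would sum this inequality over $k=0,1,\ldots,N-1$. The right-hand side telescopes to $c\sum_{j=1}^m\left(F_j(x^0)-F_j(x^N)\right)$. By Assumption~(A1), since $\{F(x^k)\}$ is monotonically nonincreasing, there exists $\hat x\in\dom(F)$ with $F(\hat x)\preceq F(x^k)$ for all $k$; hence $F_j(x^0)-F_j(x^N)\leq F_j(x^0)-F_j(\hat x)$ for each $j$, and setting $M:=c\sum_{j=1}^m\left(F_j(x^0)-F_j(\hat x)\right)<\infty$ we get $\sum_{k=0}^{N-1}|\theta_\alpha(x^k)|\leq M$ for every $N$. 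Now let $N$ be the first index at which the stopping tolerance is reached, i.e., suppose $|\theta_\alpha(x^k)|>\varepsilon$ for all $k=0,\ldots,N-1$. Then $N\varepsilon<\sum_{k=0}^{N-1}|\theta_\alpha(x^k)|\leq M$, so $N<M/\varepsilon$, which shows the algorithm produces a point with $|\theta_\alpha(x^k)|\leq\varepsilon$ within at most $\lceil M/\varepsilon\rceil={\cal O}(1/\varepsilon)$ iterations.

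It is worth being slightly careful about which constant $c$ appears: Lemma~\ref{lem:thetabound} is proved under Assumption~(A2) (it relies on Lemma~\ref{lem:tmin} for the uniform lower bound $t_{\min}$ on the step sizes), so both assumptions are genuinely used — (A2) to get the per-iteration decrease controlled by a fixed constant, and (A1) to guarantee the telescoped sum is finite. A minor alternative, avoiding (A1), would be to instead derive summability of $|\theta_\alpha(x^k)|$ directly from the quasi-Fej\'er estimate already established in the proof of Theorem~\ref{teo:conv}, but invoking (A1) is the cleanest route and matches the hypotheses of the statement.

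The argument is essentially routine once Lemma~\ref{lem:thetabound} is in hand; the only real subtlety — and the step I would treat most carefully — is justifying the bound $\sum_{k=0}^{N-1}\sum_{j=1}^m\left(F_j(x^k)-F_j(x^{k+1})\right)\leq \sum_{j=1}^m\left(F_j(x^0)-F_j(\hat x)\right)$, which requires the finite lower bound supplied by Assumption~(A1) applied to the nonincreasing sequence $\{F(x^k)\}$ from Lemma~\ref{uruguai}. Everything else is a telescoping estimate plus the observation that $|\theta_\alpha(x^k)|=-\theta_\alpha(x^k)$.
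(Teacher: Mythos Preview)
Your argument is correct and matches the paper's proof essentially step for step: both apply Lemma~\ref{lem:thetabound}, enlarge the single-index decrease to the full sum $\sum_{j=1}^m(F_j(x^k)-F_j(x^{k+1}))$ via Lemma~\ref{uruguai}, telescope, and bound the result using a point dominating the whole sequence. The only cosmetic difference is that the paper invokes the limit point $\bar{x}$ from Theorem~\ref{teo:conv} for this domination, whereas you appeal to Assumption~(A1) directly to obtain $\hat{x}$; either choice works.
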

\begin{proof}
Assume that none of the generated points $x^k$, $k=0,\ldots,N$, is an $\varepsilon$-approximate weakly Pareto optimal point of \eqref{eq:mainprob}, i.e., $|\theta_\alpha(x^k)|> \varepsilon$. Thus, from Lemma~\ref{lem:thetabound}, we obtain
$$(N+1)\varepsilon < (N+1) \min\{|\theta_\alpha(x^k)| \mid k=0,\ldots,N\} \leq \sum_{k=0}^N|\theta_\alpha(x^k)|\leq c \sum_{k=0}^N \left( F_{j_k^*}(x^k) - F_{j_k^*}(x^{k+1}) \right).$$
Hence, it follows from Lemma~\ref{uruguai} that
$$(N+1)\varepsilon < c \sum_{j=1}^m\sum_{k=0}^N \left( F_j(x^k) - F_j(x^{k+1}) \right)=c\sum_{j=1}^m \left(F_j(x^{0})- F_j(x^{N+1})\right)\leq c\sum_{j=1}^m \left(F_j(x^{0})- F_j(\bar{x})\right),$$
where $\bar{x}$ is as in Theorem~\ref{teo:conv}. Therefore,
$$N+1 < \dfrac{c\sum_{i=j}^m \left(F_j(x^{0})- F_j(\bar{x})\right)}{\varepsilon},$$
concluding the proof.
\end{proof}

\section{Numerical Results}\label{sec:numerical}

This section provides numerical results to assess the practical performance of the MPG algorithm. We are mainly interested in verifying the effectiveness of using the new explicit line search procedure in a proximal-gradient-type method. For this purpose, we consider the following methods in the reported tests:
\begin{itemize}
    \item MPG: The multiobjective Proximal Gradient algorithm with the explicit line search procedure proposed in Section~\ref{sec:alg}.
    \item MPG-Armijo: The {\it vanilla} Proximal Gradient algorithm with Armijo line search proposed in \cite{TanabeFukudaYamashita2019}. The step size is defined as
    $$t_k=:\max_{\ell\in\NN}\{2^{-\ell} \mid F_j( x^k+ 2^{-\ell}   d^k)\leq  F_j(x^k) - \sigma 2^{-\ell}  \psi _{x^k} (p^k), \; \forall j=1,\ldots,m\},$$
    where $\sigma:=10^{-4}$ is an algorithmic parameter.
    \item MPG-Implicit: The {\it implicit} Proximal Gradient algorithm proposed in \cite{ymsRay}. This algorithm can be viewed as a precursor to the MPG algorithm. It is similar to the MPG algorithm, with the line search procedure replaced by the following scheme:
    If
$$G_{j}(p_\alpha(x^k)) \leq G_{j}(x^k) + \nabla G_{j}(x^k)^\top d^k + \frac{1}{2\alpha} \| d^k \|^2, \quad \forall j=1,\ldots,m,$$
then $x^{k+1}:=p_\alpha(x^k)$, and a new iteration is initialized. Otherwise, set $\alpha\leftarrow\alpha/2$ and solve the proximal subproblem \eqref{iterationproxgrad} again.
\end{itemize}

The algorithmic parameters for the MPG implementation are set as follows: $\alpha = 1$, $\gamma = 1.9999$, $\tau_1=0.1$, and $\tau_2=0.9$. Given that the explicit line search procedure utilizes only information from the differentiable functions $G_j$, we exploit this characteristic by implementing the backtracking scheme based on quadratic polynomial interpolations of $G_j$'s functions.
The success stopping criterion for all algorithms is defined as:
\begin{equation}\label{eq:stop}
    |\theta_\alpha(x^k)|\leq 10^{-4}.
\end{equation}
The maximum number of allowed iterations is set to 200, after which the algorithm is considered to have failed.
The codes are written in Matlab and are freely available at \url{https://github.com/lfprudente}.

The chosen test problems are related to robust multiobjective optimization (see \cite{doi:10.1137/080734510}). Robust optimization deals with uncertainty in the data of optimization problems, requiring the optimal solution to occur in the worst possible scenario, i.e., for the most adverse values that the uncertain data may take.

%\subsection{Academic problems}

%We start our numerical experiments by exploring a set of academic problems. 

In the following,  we discuss how the test problems are designed (see \cite{doi:10.1080/02331934.2023.2257709}). The differentiable components $G_j$ come from convex multiobjective problems found in the literature. Table~\ref{tab:problems} provides the main characteristics of the selected problems, including the problem name, the corresponding reference for its formulation, the number of variables ($n$), and the number of objectives ($m$). For each test problem, we assume that  $\mathrm{dom}(H_j)=\{x\in\RR^n \mid lb \preceq x\preceq ub\}$ for all $j=1,\ldots,m$, where $lb,ub\in\RR^n$ are given in the last columns of the table. Thus, $\dom(F)$ also coincides with this box.

\begin{table}[h!]
\centering
\begin{threeparttable}{\fontsize{7}{7.1}\selectfont
\rowcolors{2}{}{lightgray}
\begin{tabular}{|cccccc|} 
\hline
\rowcolor[gray]{.90} Problem & Ref. & $n$  & $m$  & $lb$ & $ub$ \\ 
\hline              
AP1&  \cite{ansary} &     2 &  3   & $(-10,-10)$ & $(10,10)$\\
AP2&  \cite{ansary} &     1 &  2   & $-100$ & $100$\\
AP4&  \cite{ansary}  &     3 &  3   & $(-10,-10,-10)$ & $(10,10,10)$\\
BK1&  \cite{reviewproblems}  &  2 &  2  & $(-5,-5)$  & $(10,10)$   \\ 
DGO2&  \cite{reviewproblems}  &  1 &  2   & $-9$  & $9$   \\ 
FDS &  \cite{Fliege.etall2009}  & 5 & 3 & $(-2,\ldots,-2)$ &  $(2,\ldots,2)$  \\  
IKK1&  \cite{reviewproblems}  &  2 &  3   & $(-50,-50)$  & $(50,50)$   \\ 
JOS1&  \cite{10.5555/2955239.2955427} &  100 &  2   & $(-100,\ldots,-100)$ & $(100,\ldots,100)$\\
Lov1& \cite{doi:10.1137/100784746} & 2 & 2  & $(-10,-10)$ & $(10,10)$\\
MGH33 & \cite{ellen} &    10 &  10   & $(-1,\ldots,-1)$ &  $(1,\ldots,1)$ \\ 
MHHM2&  \cite{reviewproblems}  &  2 &  3   & $(0,0)$  & $(1,1)$   \\ 
MOP7&  \cite{reviewproblems} &     2 &  3   & $(-400,-400)$  & $(400,400)$  \\ 
PNR&  \cite{pnr} &  2 &  2  & $(-2,-2)$  &   $(2,2)$\\ 
SD&  \cite{stadler1993multicriteria} &  4 &  2   & $(1,\sqrt{2},\sqrt{2},1)$  & $(3,3,3,3)$   \\ 
SLCDT2&  \cite{slcdt} &   10 &  3   & $(-1,\ldots,-1)$  &  $(1,\ldots,1)$ \\ 
SP1&  \cite{reviewproblems} &     2 &  2   &  $(-100,-100)$ & $(100,100)$  \\ 
Toi4&  \cite{ellen} &  4 &  2   & $(-2,-2,-2,-2)$  & $(5,5,5,5)$  \\ 
Toi8&  \cite{ellen} &  3 &  3   & $(-1,-1,-1,-1)$  & $(1,1,1,1)$  \\ 
VU2&  \cite{reviewproblems} &   2 &  2   & $(-3,-3)$ &$(3,3)$   \\ 
ZDT1 & \cite{doi:10.1162/106365600568202} & 30 & 2  & $(0,\ldots,0)$ & $(1,\ldots,1)$ \\ 
ZLT1&  \cite{reviewproblems} &   10 &  5   & $(-1000,\ldots,-1000)$ &$(1000,\ldots,1000)$   \\ 
\hline
\end{tabular}}
\end{threeparttable}
\caption{List of test problems.}
\label{tab:problems}
\end{table}

For a given problem in Table~\ref{tab:problems}, we denote the {\it uncertainty parameter} by $z\in\RR^n$, and assume that the objective functions are as follows:
$$F_j(x):= G_j(x)+  x^\top z, \quad x\in\dom(F), \quad \forall j=1,\ldots,m.$$
 Minimizing $F_j(x)$ under the worst-case scenario involves solving
 $$\min_{x\in \dom(F)} G_j(x)+ \max_{z\in {\cal Z}_j} x^\top z,$$
 where ${\cal{Z}}_{j} \subset \RR^n$ is the {\it uncertainty set}. 
%For a given problem in Table~\ref{tab:problems}, we denote the {\it uncertainty parameter} by $z\in\RR^n$, and assume that the objective functions are as follows:
%$$F_j(x):= G_j(x)+  x^\top z, \quad x\in\dom(F), \quad z\in{\cal Z}_j, \quad \forall j=1,\ldots,m,$$
% where ${\cal{Z}}_{j} \subset \RR^n$ is the {\it uncertainty set}. 
% Minimizing $F_j(x)$ under the worst-case scenario involves solving
% $$\min_{x\in \dom(F)} G_j(x)+ \max_{z\in {\cal Z}_j} x^\top z.$$
 Thus, we define $H_j:\dom(F)\to\RR$ as
 \begin{equation}\label{hdef}
H_{j}(x):=\displaystyle \max_{z \in {\cal{Z}}_j}   x^\top z, \quad \forall j=1,\ldots,m.
\end{equation}
Assuming that ${\cal{Z}}_{j}$ is a nonempty and bounded polyhedron given by ${\cal{Z}}_{j}=\{z\in\RR^n \mid A_jz \preceq b_j\}$, where $A_j\in\RR^{d\times n}$ and $b_j\in\RR^d$, we can express \eqref{hdef} as
\begin{equation} \label{evalh}
\begin{array}{cl}
\max_{z}   &  x^\top z         \\
\mbox{s.t.} & A_j z \preceq b_j. \\
\end{array}
\end{equation}
This means that evaluating $H_j(\cdot)$ requires solving a linear programming problem.

Let us discuss how the subproblem can be solved. It is easy to see that \eqref{iterationproxgrad} can be reformulated by introducing an extra variable $\tau\in \RR$ as follows
\begin{equation} \label{vproblem}
 \begin{array}{cl}
\min_{\tau,u}   &  \tau + \frac{1}{2 \alpha} \| u - x^k \|^2         \\
\mbox{s.t.} & \nabla  G_j(x^k)^\top(u-x^k) + H_{j}(u)-H_{j}(x^k)\leq \tau,  \quad \forall j=1,\ldots,m,\\
                            & lb\preceq u\preceq ub.
\end{array}
\end{equation}
Now, by noting that the dual problem of \eqref{evalh} is given by
\[\begin{array}{cl}
\min_{w_j}   &  b_j^\top w_j          \\
\mbox{s.t.} & A_j^\top w_j = x, \\
                  & w_j \succeq 0,
\end{array}\]
we can use the strong duality property to reformulate \eqref{vproblem} as the following quadratic programming problem:
\begin{equation} \label{vproblem3}
 \begin{array}{cl}
\min_{\tau,u,w_j}   &  \tau+\frac{1}{2 \alpha}\|u - x^k\|^2      \\
\mbox{s.t.} & G_j(x^k)^\top(u-x^k) + b_j^\top w_j -H_{j}(x^k)  \leq \tau, \quad \forall j=1,\ldots,m,\\
                            & A_j^\top w_j = u, \quad \forall j=1,\ldots,m,\\
                            & w_j \succeq 0,\quad \forall j=1,\ldots,m, \\
                            & lb\preceq u\preceq ub.
\end{array}
\end{equation}
For more details, see \cite[Section~5.2 (a)]{TanabeFukudaYamashita2019}. In the codes, we use a simplex-dual method ({\it linprog} routine) to solve \eqref{evalh}, and an interior point method  ({\it quadprog} routine) to solve \eqref{vproblem3}.

In our experiments, we define the uncertainty set ${\cal{Z}}_{j}$ as follows: Let $B_j \in \RR^{n\times n}$ be a (random) nonsingular matrix, and $\delta > 0$ be a given parameter. Then
\begin{equation*}\label{Zdef}
{\cal{Z}}_{j} := \{ z \in \RR^n \mid  -\delta e \preceq B_jz\preceq \delta e\}, \quad \forall j=1,\ldots,m,
\end{equation*}
where  $e=(1,\ldots,1)^T\in\RR^n$.
The parameter $\delta$ plays a crucial role in controlling the uncertainty of the problem and is defined as:
\begin{equation*}\label{deltadef}
\delta := \hat{\delta} \|\hat x\|,
\end{equation*}
 where $\hat{\delta}$ is randomly chosen from the interval $[0.02,0.10]$, and $\hat x$ is an arbitrary point in $\dom(F)$.

 \subsection{Efficiency and robustness}

For each test problem, we employed 100 randomly generated starting points within the corresponding set $\dom(F)$. In this phase, each combination of a problem and a starting point was treated as an independent instance and solved by the three algorithms. A run is considered successful if an approximate weakly Pareto optimal point is found according to \eqref{eq:stop}, regardless of the objective function value. The results in Figure~\ref{fig:results} are presented using performance profiles~\cite{dolan2002benchmarking} and compare the algorithms with respect to: (a) CPU time; (b) number of evaluations of $H_j$’s functions. It is worth noting that, in a performance profile graph, the extreme left (at 1 in the domain) assesses {\it efficiency}, while the extreme right evaluates {\it robustness}.

\noindent\begin{figure}[H]
\centering \small
%\fontsize{6.0}{3.0}\selectfont 
% \begin{tabular}{>{\centering}p{\mywidth\textwidth}>{\centering}p{\mywidth\textwidth}>{\centering}p{\mywidth\textwidth}} 
  \begin{tabular}{cc} 
  (a) CPU time &(b) $H_j$’s evaluations\\ 
\hspace{-12pt}\includegraphics[scale=\myscale]{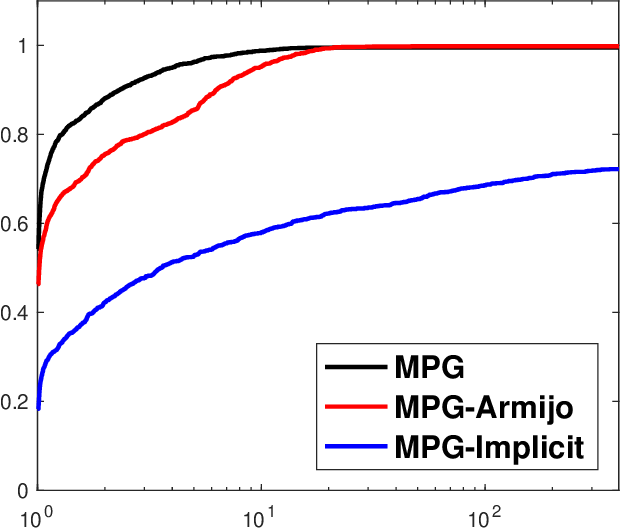}&\includegraphics[scale=\myscale]{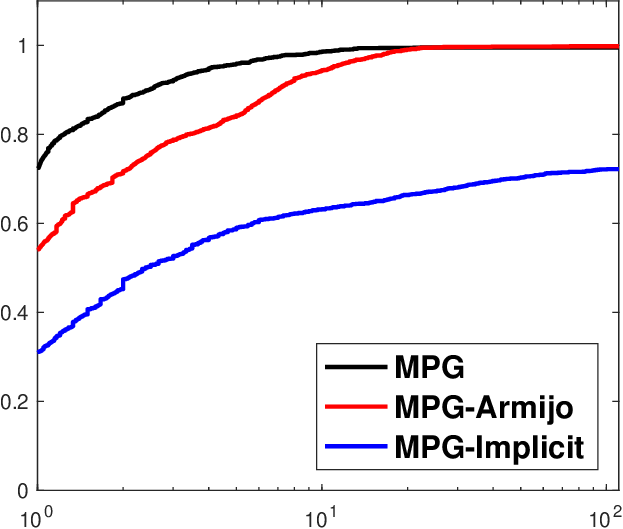} \\
\end{tabular}
\caption{Performance profiles considering: (a) CPU time; (b) number of evaluations of $H_j$’s functions.}
\label{fig:results}
\end{figure}

 As can be seen, MPG is the most efficient algorithm on the chosen set of test problems, followed by the MPG-Armijo algorithm and then the MPG-Implicit algorithm. The respective efficiencies of the methods are $54.3\%$ (resp. $72.5\%$), $46.4\%$ (resp. $54.2\%$), and $18.3\%$ (resp. $31.1\%$), considering CPU time (resp. number of $H_j$’s evaluations) as the performance measurement. The robustness values are $99.5\%$, $99.8\%$, and $72.2\%$, respectively; see Table~\ref{tab:perf}. Despite enjoying global convergence properties, the MPG-Implicit algorithm faced challenges in finding approximate weakly Pareto optimal points, often reaching the maximum allowed iterations. This limitation contrasts with the robust performance of the MPG and MPG-Armijo algorithms. Furthermore, the superiority of the MPG algorithm over the MPG-Armijo algorithm in terms of function evaluations highlights the potential advantage of using the new explicit line search procedure in problems in which evaluating the $H_j$'s functions is computationally expensive.

  \begin{table}[htb!] 
  \footnotesize
  \centering
\begin{tabular}{|c|c|c|} \hhline{~*2{-}|}  
\multicolumn{1}{c|}{} & \multicolumn{1}{c|}{\cellcolor[gray]{0.9} Efficiency (CPU time -- $H_j$’s evaluations) ($\%$) } & \multicolumn{1}{c|}{\cellcolor[gray]{0.9} Robustness ($\%$)} \\ \hline
\cellcolor[gray]{0.9} MPG            & 54.3  -- 72.5  & 99.5 \\ \hline
\cellcolor[gray]{0.9} MPG-Armijo     & 46.4 -- 54.2   & 99.8\\ \hline
\cellcolor[gray]{0.9} MPG-Implicit   & 18.3  -- 31.1    & 72.2\\ \hline
\end{tabular}
\caption{Efficiency and robustness of the algorithms on the chosen set of test problems.}
\label{tab:perf}
\end{table}

 \subsection{Pareto frontiers}

In multiobjective optimization, the main goal is to estimate the Pareto frontier of a given problem. A commonly employed strategy for this task involves running an algorithm from various starting points and collecting the Pareto optimal points found. Therefore, for a given test problem, we execute each algorithm for 2 minutes to obtain an approximation of the Pareto frontier. We compare the results using well-known metrics such as {\it Purity} and ($\Gamma$ and $\Delta$) {\it Spread}. In essence, for a given problem, the Purity metric measures the algorithm's ability to discover points on the Pareto frontier, while the Spread metric assesses its capability to obtain well-distributed points along the Pareto frontier. For a detailed discussion of these metrics and their uses along with performance profiles, see \cite{doi:10.1137/10079731X}. 
\noindent\begin{figure}[H]
\centering \small
 \begin{tabular}{ccc}
 (a) Purity &(b) Spread $\Gamma$&(c) Spread $\Delta$\\
\hspace{-12pt}\includegraphics[scale=\myscale]{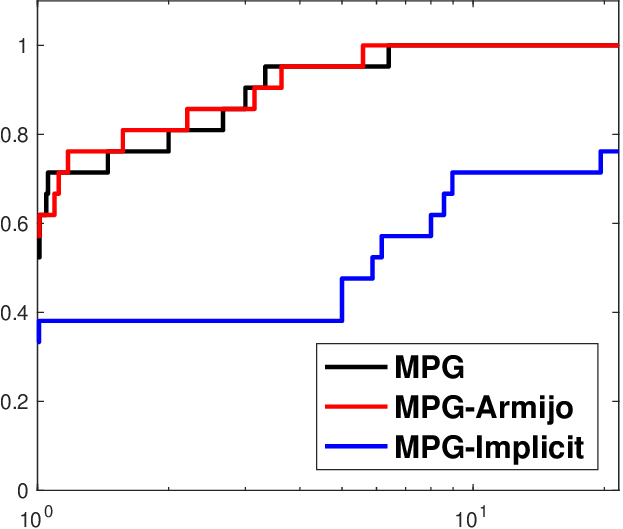}&\hspace{-12pt} \includegraphics[scale=\myscale]{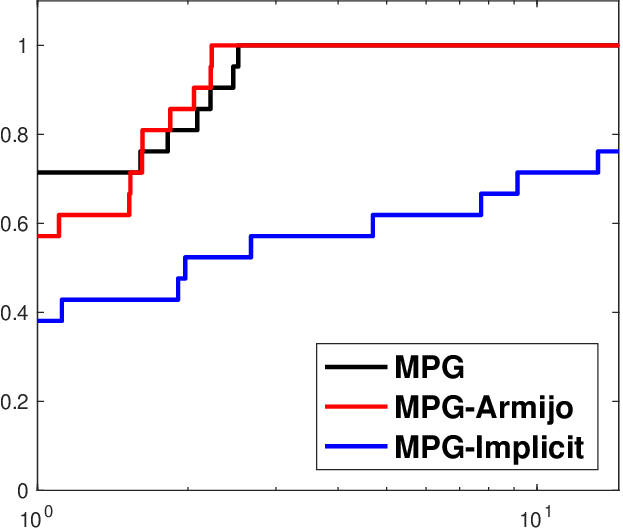}&\hspace{-12pt}\includegraphics[scale=\myscale]{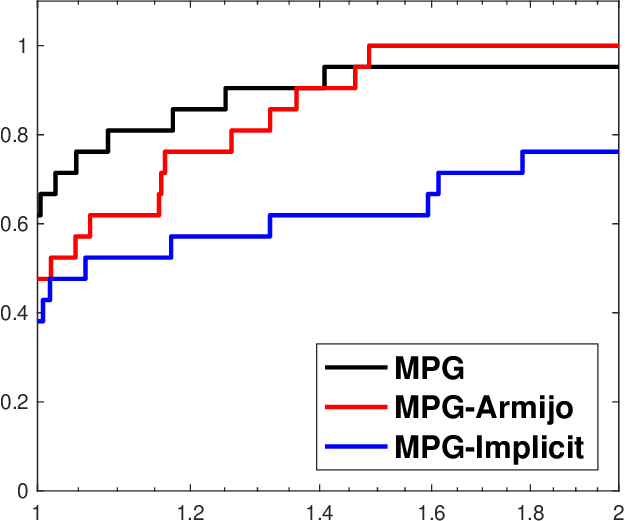}\\
\end{tabular}
\caption{Metric performance profiles: (a) Purity; (b) Spread $\Gamma$; (c) Spread $\Delta$.}
\label{fig:metrics}
\end{figure}
The results presented in above Figure~\ref{fig:metrics} clearly show that the MPG and MPG-Armijo algorithms outperform the MPG-Implicit algorithm. Regarding the Purity metric, both the MPG algorithm and the MPG-Armijo algorithm exhibited equivalent performance. However, concerning the Spread metrics, a slight advantage can be observed for the MPG algorithm. This allows us to conclude that using the new linesearch procedure does not compromise the practical performance of a proximal-gradient-type method, especially concerning solution quality. 

For illustrative purposes, Figure~\ref{fig:pareto} below displays the image space along with the approximation of the Pareto frontier obtained by the MPG algorithm for the BK1, Lov1, PNR, and VU2 problems. In the graphics, a solid dot denotes a final iterate, whereas the origin of a straight line segment indicates the corresponding starting point. As can be seen, the algorithm visually achieves a satisfactory outline of the Pareto frontiers.

\noindent\begin{figure}[h!]
\centering \small
%\fontsize{6.0}{3.0}\selectfont 
% \begin{tabular}{>{\centering}p{\mywidth\textwidth}>{\centering}p{\mywidth\textwidth}>{\centering}p{\mywidth\textwidth}} 
  \begin{tabular}{cc} 
  \multicolumn{2}{c}{BK1}\\
\hspace{-12pt}\includegraphics[scale=\myscale]{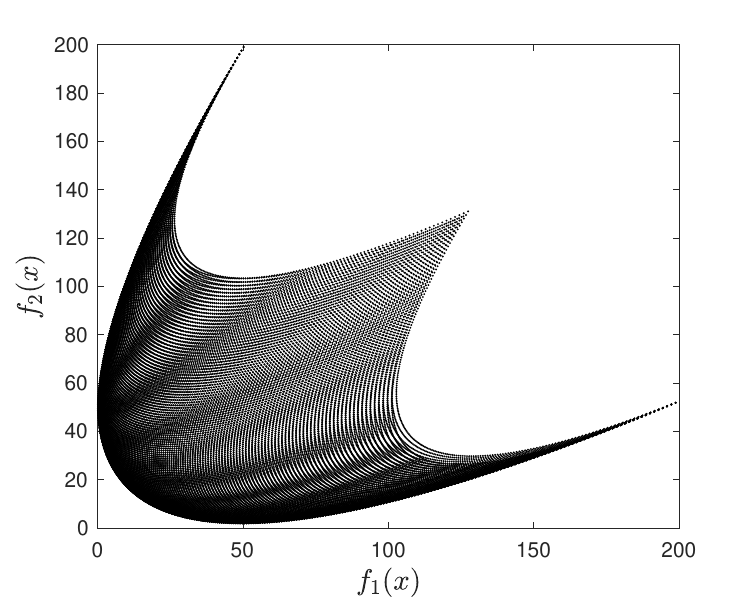}&\includegraphics[scale=\myscale]{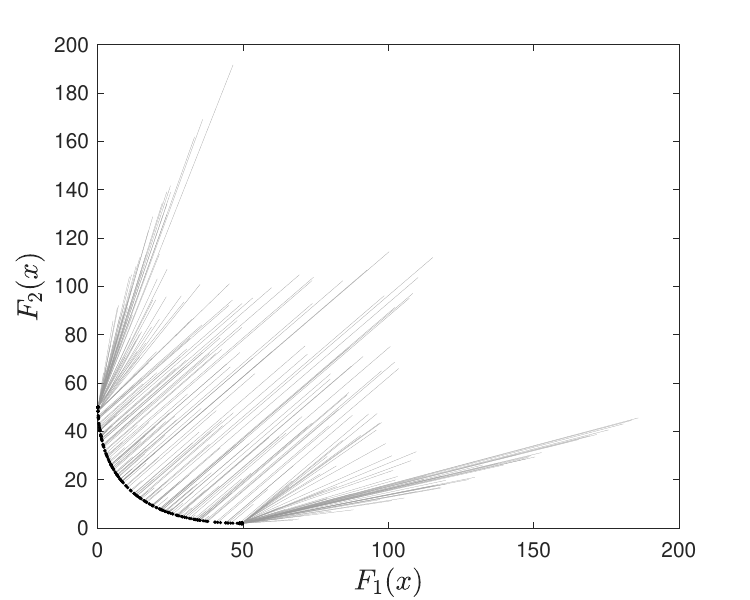} \\
  \multicolumn{2}{c}{Lov1}\\
\hspace{-12pt}\includegraphics[scale=\myscale]{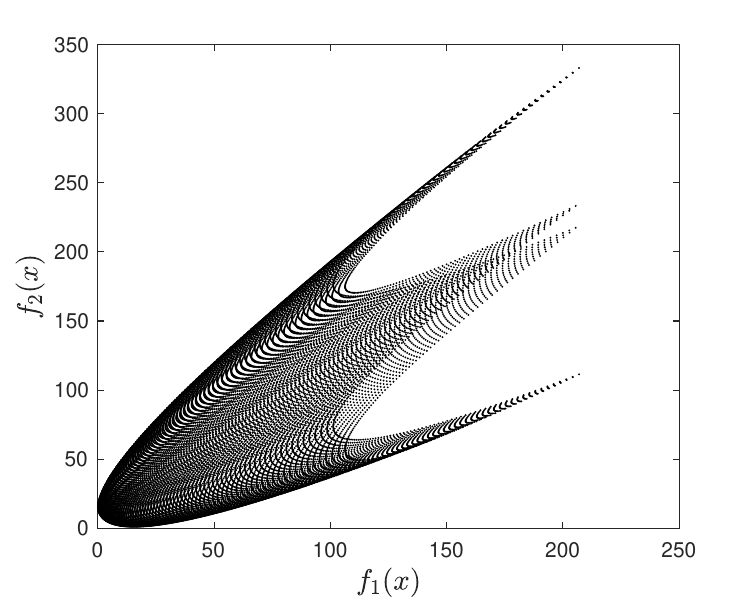}&\includegraphics[scale=\myscale]{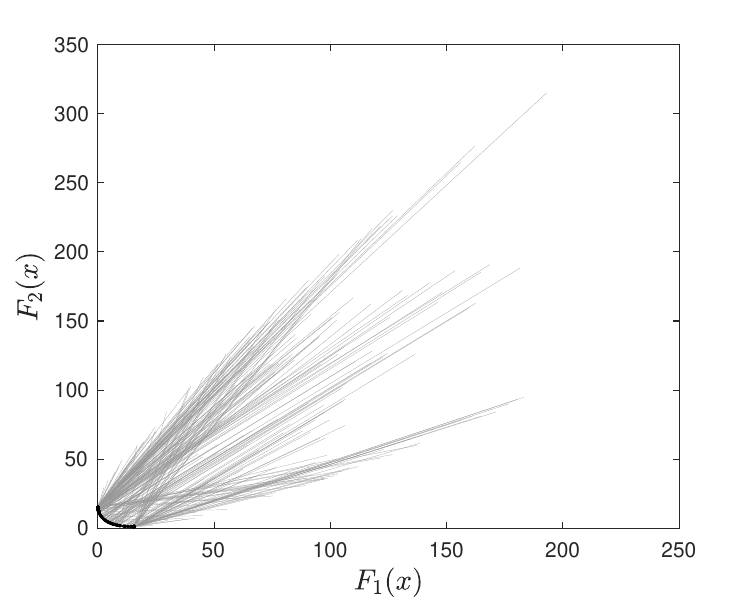} \\
  \multicolumn{2}{c}{PNR}\\
 \hspace{-12pt}\includegraphics[scale=\myscale]{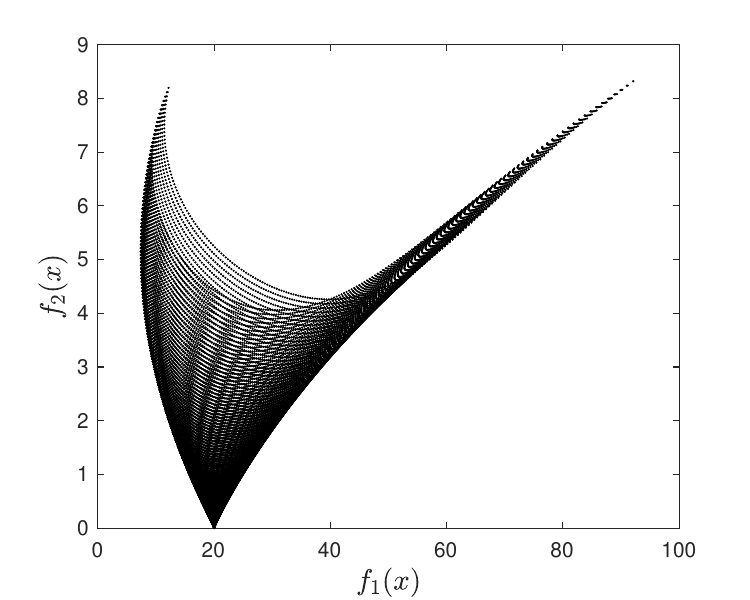}&\includegraphics[scale=\myscale]{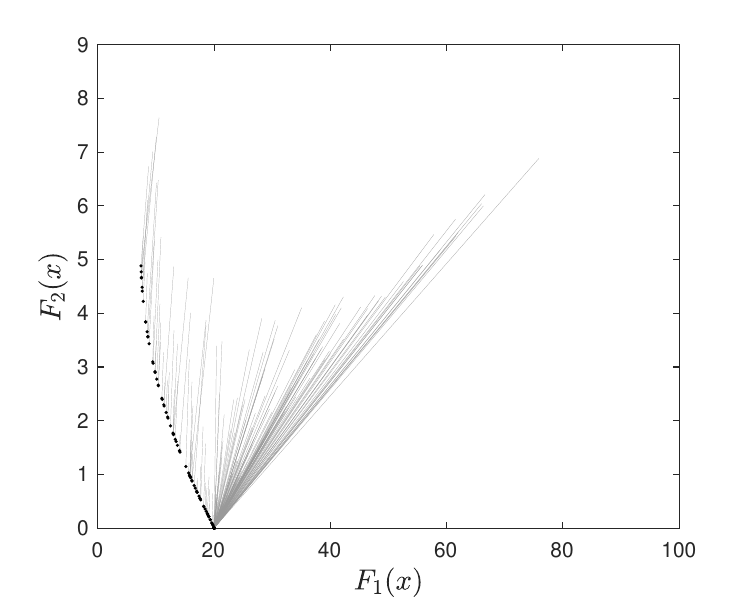} \\
   \multicolumn{2}{c}{VU2}\\
\hspace{-12pt}\includegraphics[scale=\myscale]{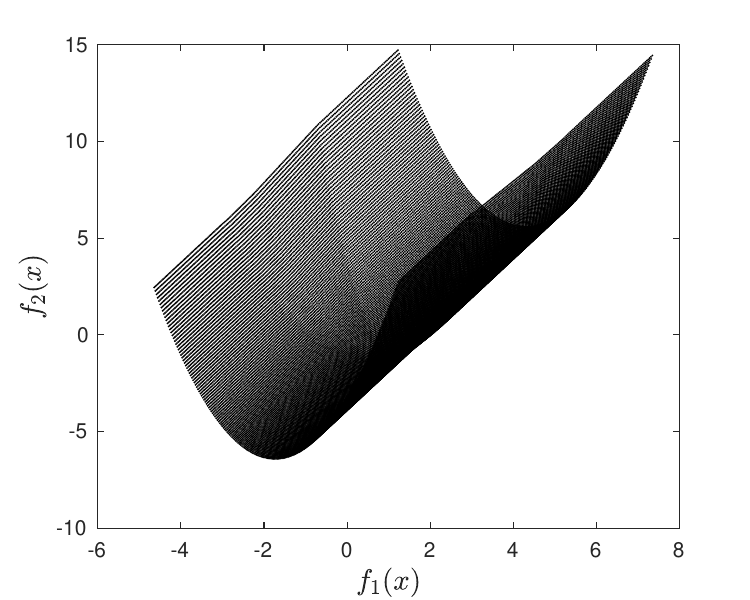}&\includegraphics[scale=\myscale]{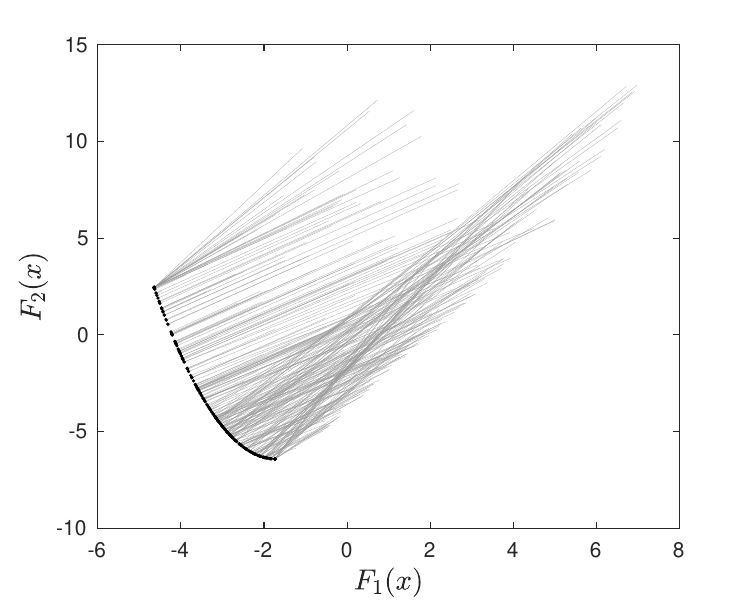} \\
\end{tabular}
\caption{Image spaces and approximations of the Pareto frontiers obtained by the MPG algorithm for the BK1, Lov1, PNR, and VU2 problems.}
\label{fig:pareto}
\end{figure}

%\subsection{Markowitz portfolio optimization}

\section{Conclusion}\label{sec:concluding}

We presented a proximal gradient method with a new explicit line search procedure designed for convex multi-objective optimization problems, where each objective function $F_j$ is of the form $F_j := G_j + H_j$, with $G_j$ and $H_j$ being convex functions and $G_j$ being continuously differentiable. The algorithm requires solving only one proximal subproblem per iteration, with the backtracking scheme exclusively applied to the differentiable functions $G_j$. Our numerical experiments revealed that the proposed method exhibits a reduced number of evaluations of $H_j$ when compared to the proximal gradient algorithm with Armijo line search \cite{TanabeFukudaYamashita2019}. This characteristic makes our approach particularly promising in applications where the computation of $H_j$ is computationally expensive. Future investigations could explore the combination of the proposed line search procedure with other methods, such as Newtonian algorithms. 
%Additionally, there is potential for research into accelerated versions of the proximal gradient method, equipped with the explicit line search procedure introduced in this study.

%\bibliographystyle{abbrv}
%\bibliography{references}

\end{document}